\preto\subequations{\ifhmode\unskip\fi}
\theoremstyle{definition}
\newtheorem{definition}{Definition}
\newtheorem{theorem}{Theorem}
\newtheorem{proposition}{Proposition}
\newtheorem{remark}{Remark}
\title{\Large Benders Cut Classification via Support Vector Machines for Solving Two-stage Stochastic Programs}
\begin{document}
%\graphicspath{{Figure/}} %Setting the graphicspath

\allowdisplaybreaks
\author{Huiwen Jia\thanks{Department of Industrial and Operations Engineering, University of Michigan at Ann Arbor, USA. Email: {\tt hwjia@umich.edu};}
    ~~~Siqian Shen\thanks{Corresponding author; Department of Industrial and Operations Engineering, University of Michigan at Ann Arbor, USA. Email: {\tt siqian@umich.edu}.}}
\date{}

\maketitle

\begin{abstract}
We consider Benders decomposition for solving two-stage stochastic programs with complete recourse based on finite samples of the uncertain parameters. We define the Benders cuts binding at the final optimal solution or the ones significantly improving bounds over iterations as \emph{valuable cuts}. We propose a learning-enhanced Benders decomposition (LearnBD) algorithm, which adds a cut classification step in each iteration to selectively generate cuts that are more likely to be valuable cuts. The LearnBD algorithm includes  two phases: (i) sampling cuts and collecting information from training problems and (ii) solving testing problems with a support vector machines (SVM) cut classifier. We run the LearnBD algorithm on instances of capacitated facility location  and multi-commodity network design under uncertain demand. Our results show that SVM cut classifier works effectively for identifying valuable cuts, and the LearnBD algorithm reduces the total solving time of all instances for different problems with various sizes and complexities.
\end{abstract}
   
\textbf{Keywords}: Benders decomposition, two-stage stochastic (integer) programming, support vector machine (SVM), cut classification

%%%%%%%%%%%%%%%%%%%%%%%%%%%%%%%%%%%%%%%%%%%%%%%%%%%%%%%%%%%%%%%%%%%%%%
\section{Introduction}
\label{sec:intro} 
%%%%%%%%%%%%%%%%%%%%%%%%%%%%%%%%%%%%%%%%%%%%%%%%%%%%%%%%%%%%%%%%%%%%%%

In this paper, we focus on the Benders decomposition \citep{benders1962partitioning} and its 
implementation for solving a broad class of two-stage stochastic programming models.
 In the first stage, the value of decision variable $x \in \mathbb{R}^{n_1}$ or $x \in \mathbb{Z}^{n_1}$ is chosen from a feasible region $\mathcal{X}$ before the realization of the uncertainty given the cost vector $c \in \mathbb{R}^{n_1}$.
In the second stage, decision variable $y \in \mathbb{R}^{n_2}$ is  a continuous recourse decision. 
The matrix $W \in \mathbb{R}^{m_2 \times n_2}$, vector $h \in \mathbb{R}^{m_2}$, matrix $T \in  \mathbb{R}^{m_2 \times n_1}$ and cost vector $q \in \mathbb{R}^{n_2}$  
are subject to uncertainty. We denote the overall uncertain parameter as $\xi = [W, h, T,q]$.
A two-stage stochastic programming model is given by
\begin{equation}\label{eq:2-state}
\begin{aligned}
 \  \ & \min_{x \in \mathcal{X}} \ \  c^Tx + \mathbb{E}_{\xi}[ Q(x,\xi) ]
\end{aligned}
\end{equation}
where $\mathbb{E}_{\xi}[\cdot]$ takes expectation of $\cdot$ based on the probability distribution of $\xi$ and
\begin{equation}
\begin{aligned}
 \ Q(x,\xi)  \overset{\textrm{def}}{=}  \ &  \min_y \ \  q^T y \\
& \ \textrm{s.t.} \ \ \ Wy = h - Tx .\\
\end{aligned}
\end{equation}

We consider a finite number of realizations of the uncertain parameter $\xi$, called ``scenarios" or ``samples" in the stochastic programming literature \citep[see, e.g.,][]{birge2011introduction}.
Let $\Omega$ be the sample space that contains all the scenarios and each scenario $\omega \in \Omega$ is associated with a specific realization $\xi_{\omega}=[W_{\omega}, h_{\omega} ,  T_{\omega} ,  q_{\omega}] $ of the uncertain parameter $\xi$. Denote the occurrence probability of scenario $\omega$ as $p_{\omega}$, and thus  $\sum_{\omega \in \Omega} p_\omega = 1$. Model~\eqref{eq:2-state} can be reformulated as
\begin{equation} \label{eq:1}
\begin{aligned}
 \  \ & \min_{x \in \mathcal{X}} \ \  c^Tx + \sum_{\omega \in \Omega} p_{\omega} Q_{\omega}(x) , \\
\end{aligned}
\end{equation}
where
\begin{equation} \label{eq:2}
\begin{aligned}
 \ Q_{\omega}(x) \overset{\textrm{def}}{=} Q(x, \xi_{\omega})  = \ &  \min_y \ \  q_{\omega}^T y \\
& \ \textrm{s.t.} \ \ \ W_{\omega}y = h_{\omega} - T_{\omega}x .\\
\end{aligned}
\end{equation}

Note that the assumption of having finite scenarios is made without loss of generality.  If the uncertain parameter $\xi=[W, h, T, q]$ follows a continuous distribution, one can apply the Monte Carlo sampling approach to generate $N_s$ i.i.d. samples $\{ \omega_1, \ldots, \omega_{N_s} \}$ of the uncertain parameters. 
The second-stage objective function $\mathbb{E}_{\xi}[ Q(x,\xi) ]$ can be replaced by the sample average approximation (SAA) $\frac{1}{N_s}\sum_{i=1}^{N_s} Q_{\omega_i}(x)$ \citep{kleywegt2002sample}. 
%prove that both the optimal solution and the optimal objective value of the SAA model will converge to those of the original problem, when $N_s$ grows sufficiently large.

\subsection{An Overview of Benders Decomposition}
\label{subsec:bd}

With large $|\Omega|$, Model~\eqref{eq:1} is in general computationally intractable when it involves integer variable $x$. The Benders decomposition, which takes advantage of the decomposable structure of two-stage stochastic programs, is applied widely to optimize variants of Model \eqref{eq:1} formulated for a wide range of applications \citep[see, e.g.,][]{magnanti1981accelerating}.
Creating new variables $\theta_{\omega} \in \mathbb{R}, \ \forall \omega \in \Omega$ in the first-stage problem, one can formulate a relaxation of the original problem, called relaxed master problem (RMP), which has an initial form:
\begin{equation} \label{eq:rmp0}
\begin{aligned}
(\textrm{RMP}^0) \ \ & \min_{x \in \mathcal{X}, \theta} \ \ c^Tx + \sum_{\omega \in \Omega} p_{\omega} \theta_{\omega}.
\end{aligned}
\end{equation}
% optimality cuts
Subproblems (SPs) are defined as the linear programming dual of the second-stage problems \eqref{eq:2} with dual variable  $\pi_{\omega} \in  \mathbb{R}^{m_2}, \ \forall  \omega \in \Omega $.
We refer to $\textrm{SP}_{\omega}$ as the SP for scenario $\omega$, formulated as
\begin{equation} \label{eq:3}
\begin{aligned}
    (\textrm{SP}_{\omega}) \ \ \ Q^D_{\omega}(x) \ =  \  &  \underset{ \pi_{\omega} }{\max} \ \  (h_{\omega} - T_{\omega}x)^T \pi_{\omega} \\
& \ \textrm{s.t.} \ \ \  W_{\omega}^T \pi_{\omega} \leq q_{\omega} .\\
\end{aligned}
\end{equation}
Let $V^{\omega,t}$ be the set of identified extreme points of the feasible region of $\textrm{SP}_{\omega}$ in iteration $t$, and we have $V^{\omega,0} = \emptyset$.
Similarly, let $R^{\omega,t}$ be the set of identified extreme rays of the feasible region of $\textrm{SP}_{\omega}$ in iteration $t$, and $R^{\omega,0} = \emptyset$.
The two sets are respectively associated with Benders optimality cuts and feasibility cuts generated during iterations $1, \ldots, t-1$, and we will explain the cuts in detail later.
In iteration $t$, the corresponding RMP is given by:
\begin{equation} \label{eq:rmpt}
\begin{aligned}
(\textrm{RMP}^t) \ \   & \min_{x \in \mathcal{X},\ \theta} \ \ c^Tx + \sum_{\omega \in \Omega} p_{\omega} \theta_{\omega} & \\
& \ \textrm{s.t.}   \ \ \   \theta_{\omega} \geq (h_{\omega} - T_{\omega}x)^T \nu_{\omega} & \ \ \omega \in \Omega, \ & \ \nu_{\omega} \in V^{\omega,t};\\
&  \ \ \ \ \ \ \ \  (h_{\omega} - T_{\omega}x)^T \rho_{\omega} \leq 0 & \ \ \omega \in \Omega, \ & \ \rho_{\omega} \in R^{\omega,t}.
\end{aligned}
\end{equation}

After solving $\textrm{RMP}^t$, we obtain an optimal solution $(\hat{x}^t, \hat{\theta}^t)$. 
Then for each scenario $\omega \in \Omega$ and its subproblem $\textrm{SP}_{\omega}$, we first check whether the current $\textrm{RMP}^t$ solution leads to a feasible second-stage problem by solving a corresponding subproblem with decision variable $\sigma_{\omega}  \in  \mathbb{R}^{m_2} $, modeled as 
\begin{equation} \label{eq:4}
\begin{aligned}
(\textrm{SP}_{\omega}\textrm{-}\textrm{F}) \ \   \  &  \underset{ \sigma_{\omega} }{\max} \ & \ (h_{\omega} - T_{\omega}\hat{x}^t )^T \sigma_{\omega} \\
& \ \textrm{s.t.} \ &  \  W_{\omega}^T \sigma_{\omega} \leq \mathbf{0};\\
& & \ \| \sigma_{\omega} \| \leq 1 .\\
\end{aligned}
\end{equation}
If $\textrm{SP}_{\omega}\textrm{-}\textrm{F}$ has a positive optimal objective value with an optimal solution $\bar{\sigma}_{\omega}$, then from any feasible solution to $\textrm{SP}_{\omega}$, we can move along the direction $\bar{\sigma}_{\omega}$ to stay feasible (due to the first constraint of model \eqref{eq:4}) but increase the objective value of $\textrm{SP}_{\omega}$. This implies that $\textrm{SP}_{\omega}$ is unbounded and the second-stage problem is infeasible for given $\hat{x}^t$. 
To cut off the infeasible first-stage solution $\hat{x}^t$, we generate a Benders feasibility cut:
\begin{equation}\label{eq:feasiblecut}
(h_{\omega} - T_{\omega}x )^T \bar{\sigma}_{\omega} \leq 0 
\end{equation}
to $\textrm{RMP}^{t+1}$, which is equivalent to letting $ R^{\omega,t+1} = R^{\omega,t} \cup \{ \bar{\sigma}_{\omega} \}$.
In this paper, we only focus on the case having complete recourse, under which any feasible first-stage solution will result in a feasible second-stage problem.
Therefore, our $\textrm{RMP}^{t}$ (i.e., Model \eqref{eq:rmpt}) for each iteration $t$ only contains the first set of optimality cuts, whose derivation is given as follows. 
If the subproblem is feasible, then in iteration $t$ we check the optimality of $(\hat{x}^t, \hat{\theta}^t)$.
We solve $\textrm{SP}_{\omega}$ with $x = \hat{x}^t$ to obtain an optimal solution $\bar{\pi}_{\omega}$ and the optimal objective value $Q^D_{\omega}(x) = \bar{\pi}_{\omega}^T (h_{\omega} - T_{\omega}\hat{x}^t)$. 
By strong duality, for any value of $x$, $ Q^D_{\omega}(x)  = Q_{\omega}(x)$.
The solution to $\textrm{RMP}^{t}$ will reach the same objective value as the original problem when $\hat{\theta}_{\omega}^t \geq Q_{\omega}(\hat{x}^t), \ \forall \omega \in \Omega$. 
Therefore, $\hat{\theta}_{\omega}^t < Q^D_{\omega}(x)$ indicates that the current solution $(\hat{x}^t, \hat{\theta}^t)$ is not optimal for the original problem.
Thus, we add a Benders optimality cut 
\begin{equation}\label{eq:optcut}
\theta_{\omega} \geq (\bar{\pi}_{\omega})^T(h_{\omega} - T_{\omega}x)  
\end{equation} 
to $\textrm{RMP}^{t+1}$, which is equivalent to letting $ V^{\omega,t+1} = V^{\omega,t} \cup \{ \bar{\pi}_{\omega} \}$. We refer to the cuts being added and the corresponding dual extreme points being identified interchangeably in this paper.

In iteration $t$, the objective value of $\textrm{RMP}^t$ provides a valid lower bound to the origin problem because it is a relaxation. 
If all SPs have finite optimal objective values, $\hat{x}^t$ and all recourse solutions together form a feasible solution to the original two-stage problem, and thus
\begin{equation}
c^T \hat{x}^t + \underset{\omega \in \Omega}{\sum} p_{\omega}Q_{\omega}(\hat{x}^t)
\end{equation}
provides a valid upper bound.
The algorithm terminates when the upper and lower bounds are equal or their gap is within a pre-specified tolerance $\delta$. In this paper, we define the gap as
\begin{equation}\label{eq:relativedif}
\textrm{optimality gap} = \frac{\textrm{upper bound} - \textrm{lower bound}}{ \textrm{lower bound}} \times 100\%.
\end{equation}
The Benders decomposition converges in a finite number of iterations due to the finite number of dual extreme rays and extreme points of the finitely many SPs.

\subsection{Challenges and Research Overview}\label{subsec:research}

While the Benders decomposition method helps to solve two-stage stochastic programs efficiently, it could suffer from slow convergence.
One reason is that the size of RMPs becomes too large due to the quickly increased number of newly added cuts over iterations.
\citet{geoffrion1974multicommodity} are among the first to notice and emphasize on the computational difficulty of solving RMPs for stochastic binary integer programs. \citet{magnanti1981accelerating} report that over  $90\%$  of the total time of implementing the Benders decomposition is spent on solving RMPs.
\citet{minoux1986mathematical} points out that not all extreme points of the feasible region of SPs equally contribute to restricting the optimal solution to RMPs.
Therefore, a larger number of Benders cuts are not tight at the final optimal solution, but can increase the size of RMPs, which are then extremely hard to solve as large-scale integer programs. 

We propose a two-phase learning-enhanced Benders decomposition (LearnBD) algorithm to solve two-stage stochastic integer programs with finite samples of the uncertain parameter and complete recourse, where the second-stage subproblems are  linear programs (LPs).
We define cuts as \emph{valuable cuts} when they can either cut the feasible region in the current iteration significantly, or be tight at the final optimal solution \citep[see][for a similar definition in the latter case]{holmberg1990convergence}.
%we propose a new perspective to accelerate the Benders decomposition.
Our goal is to only add valuable cuts to the corresponding RMP in each iteration.
Up to date, there is no practical and systematic way to perform cut classification and to accelerate the iterative process for Benders decomposition for large-scale optimization problems, according to \citet{rahmaniani2017benders}.
We propose to integrate machine learning techniques into the traditional Benders decomposition framework to learn cut characteristics and selectively generate subsets of Benders cuts iteratively.

\subsection{Contributions of the Paper}\label{subsec:contribution}

We summarize the main contributions of this paper as follows.
\begin{itemize}
    \item Firstly, we identify a set of characteristics and quantify performance measures of Benders cuts. We construct a cut classifier using support vector machines (SVM), a widely used supervised machine learning method that takes history observations and their labels as input, to identify valuable cuts in each iteration. 
    \item Secondly, we develop the LearnBD algorithm with SVM cut classifier, to limit the size of RMPs and reduce total solving time.  We also provide guidelines for choosing hyperparameters for enhancing the effectiveness of the LearnBD algorithm.
    \item Thirdly, we test instances of capacitated facility location  and multi-commodity network design under uncertain demand, to demonstrate the computational advantages of LearnBD in different problem settings. 
    Our results show that the LearnBD algorithm leads to smaller sizes of RMPs with fewer accumulated cuts, and therefore shorter time for solving RMPs. 
\end{itemize}

\subsection{Structure of the Paper}\label{subsec:paperstructure}

The remainder of this paper is organized as follows. 
In Section~\ref{sec:lr}, we review the literature on the effort of improving Benders decomposition for solving large-scale optimization problems.
In Section~\ref{sec:learning-enhanced}, we develop the LearnBD algorithm and use SVM for constructing the cut classifier. 
%In particular, we refer to Section~\ref{subsec: function apply} for reasons of not using reinforcement learning.
In Section~\ref{sec:compu}, we present the computational results of the LearnBD algorithm benchmarked with the traditional Benders approach. 
In Section~\ref{sec:conclusion}, we conclude the paper and describe future research.

%%%%%%%%%%%%%%%%%%%%%%%%%%%%%%%%%%%%%%%%%%%%%%%%%%%%%%%%%%%%%%%%%%%%%%
\section{Literature Review}\label{sec:lr}  
%%%%%%%%%%%%%%%%%%%%%%%%%%%%%%%%%%%%%%%%%%%%%%%%%%%%%%%%%%%%%%%%%%%%%%

The Benders decomposition was initially proposed by \citet{benders1962partitioning} and was then widely used for solving problems of scheduling and planning \citep{cordeau2001benders,hooker2007planning}, network  optimization and transportation \citep{laporte1994priori,costa2005survey,binato2001new}, and inventory control and management \citep{federgruen1984combined,cai2001solving}.
\citet{magnanti1981accelerating} and \citet{naoum2013interior} note that directly applying the traditional Benders decomposition may require excessive computational effort. It is mainly due to the poor convergence of RMPs that has been computationally demonstrated in \citet{orchard1968advanced} and \citet{wolfe1970convergence}. Several researchers have proposed enhancement strategies depending on different problem structures to accelerate the algorithm accordingly, of which we describe the details below.

In the traditional Benders approach detailed in Section~\ref{subsec:bd}, RMPs and SPs are solved iteratively, and thus the first stream of studies concentrates on problem-solving techniques, and particularly techniques for efficiently computing RMPs or SPs. \citet{geoffrion1974multicommodity} propose to only sub-optimally solve RMPs in each iteration to enable cut generation, without seeking tight cuts at the beginning of the Benders approach.
Similarly, \citet{raidl2015decomposition} solves RMPs using heuristics to save computational time. 
\citet{zakeri2000inexact} show that sub-optimal solutions to the SPs can still generate valid cuts in RMPs, and thus effective heuristic approaches are designed for solving the SPs approximately.

The second stream of studies focuses on decomposition strategies, to guide the process of partitioning variables to remain in RMPs or in SPs. 
\citet{crainic2014partial} %point out that RMPs, which only contain part of the decision variables, lose valuable information about the problem and accordingly may lead to many iterations before converging.
%Therefore, they 
propose a partial Benders decomposition algorithm to reduce the number of feasibility and optimality cuts, while adding information of SPs into RMPs by retaining or creating scenarios. 
They develop different decomposition strategies for choosing the retaining scenarios when solving two-stage mixed-integer programming (MIP) models with continuous recourse.
%They prove that the proposed approach can reduce the need for generating feasibility cuts.
In addition, \citet{gendron2016branch} propose a non-standard decomposition strategy, which retains the second-stage variables in RMPs, and the authors test the results using instances of network design problems. 

%\citep[see, e.g.,][]{schlkopf2018learning}.
Machine learning techniques have been applied to general-purpose optimization algorithms for urging quick convergence to optimal or sub-optimal solutions \citep[see, e.g.,][]{he2014learning,khalil2017learning}. 
\citet{khalil2016learning} introduce a novel data-driven framework for variable selection to solve MIP models via branch-and-bound algorithm efficiently.
%and \citet{khalil2016machine} tales an online learning variant of the algorithm in the previous paper as a future research direction.
\citet{kruber2017learning} develop a supervised learning approach to distinguish a stronger reformulation of a given MIP model and to determine which decomposition to implement in order to improve the speed of MIP solvers.
\citet{misra2018learning} directly construct a model for seeking the optimal solution as a function
of the input parameter, by learning relevant sets of active constraints given computationally expensive large-scale
parametric models.
 
%To the best of our knowledge, machine learning techniques have not been widely used in standard decomposition approaches,  including Benders decomposition, which is the main motivation for this work. 
Recently, several papers apply machine learning to improve algorithmic efficiency of decomposition approaches, especially focusing on cut classification. Among them, \citet{rl-tang2019reinforcement} model the cut selection in integer programming as a reinforcement learning (RL) problem. They define the corresponding concepts in RL and implement an offline training phase. \citet{sl-baltean2019scoring} develop linear outer-approximations of semi-definite constraints that can be effectively integrated into global solvers. They construct a neural network for predicting the objective improvement of each cut, which is similar to the performance measure proposed in our paper.

%%%%%%%%%%%%%%%%%%%%%%%%%%%%%%%%%%%%%%%%%%%%%%%%%%%%%%%%%%%%%%%%%%%%%%%%%%%%%%%%%%%%%%%%%%%%%%%%%%%%%%%
\section{Learning-enhanced Benders Decomposition}\label{sec:learning-enhanced} 
%%%%%%%%%%%%%%%%%%%%%%%%%%%%%%%%%%%%%%%%%%%%%%%%%%%%%%%%%%%%%%%%%%%%%%%%%%%%%%%%%%%%%%%%%%%%%%%%%%%%%%%

 We develop a Learning-enhanced Benders Decomposition (LearnBD) for solving two-stage stochastic programs with complete and continuous recourse in the second stage. 
The algorithm aims to solve a set of two-stage problems that share similar problem structures (i.e., dimensions of decision variables, constraint matrices, and cost parameters in the objective function) but could have different realizations of the uncertain parameter. As a result, LearnBD constructs a training problem that shares the same problem structures as the original problem(s), while the distributions of the uncertain parameter can be different.
LearnBD samples cut and collects information from the training problem. 
Then, it uses the collected cut information to train an SVM cut classifier and then optimizes the original problem(s). 

To find potential applications of LearnBD, consider some industries where we need to periodically solve similar optimization problems with the same system structures and decision frameworks, but with different input data representing the current environment and status of the system.
For example, the unit commitment problem in the power system is solved every hour to determine the operational schedule of the generating units under random renewable generation and electricity loads \citep[see, e.g.,][]{saravanan2013solution,dashti2016weekly}. A grid  operator needs to solve a stochastic program in the form of \eqref{eq:1} every hour with different $\xi_{\omega}$-values. If one can solve these similar problems in an efficient way, it can significantly improve the operational efficiency of power grids. The proposed LearnBD can be applied in this case, where one can sample cuts from training problems using  previous days' data , and re-use the cut classifier for the stochastic programs to be solved in future hours. 
Moreover, even for the problems that we only solve once, LearnBD could be useful.
For instance, consider solving stochastic programming models using SAA, where we can solve a number of SAA-based reformulations with different i.i.d. samples repeatedly, by using cut information collected from solving one such reformulation as a training problem. 

LearnBD includes two phases: Offline cut sampling (Phase 1) and solving a given problem using cut classification (Phase 2). The collected cut information can be used to solve any testing problem of the same variable-and-constraint size. Therefore, the time spent on Phase 1 does not affect the total solving time. In Phase 1, we solve training problems under the estimation of the uncertainty and collect training data for cut classification. 
In Phase 2, for a given testing problem, which is viewed as an unseen testing instance, we train cut classifiers using the training data from Phase 1 and apply cut classification steps throughout the Benders iterations.

We provide an overview of LearnBD in Figure~\ref{figure:3-1}, in which 
related to Phase 1, $K$ is the number of sampling paths, $N$ is the length of each sampling path, and $\textrm{RMP}^n_k$, $n=1,\ldots,N,$ $k=1,\ldots,K$ is the RMP of the training problem corresponding to iteration $n$ in sampling path $k$.
Related to Phase 2, $\textrm{RMP}^t, \ t = 0, \ldots, T$ is the  RMP of the testing problem corresponding to  iteration $t$ and $t=T$ denotes the last iteration.
 In Phase 1, we perform cut sampling to generate training data, including cut characteristics and performance measures (see Section~\ref{subsec:cut-sampling}). In Phase 2, we utilize the classifier to distinguish valuable cuts from all generated cuts in each iteration and solve RMPs iteratively by only adding valuable cuts (see Section~\ref{subsec:functionapply}). 
As a sub-procedure in Phase 2, we train an SVM classifier with the training data generated in Phase 1, which takes cut characteristics as input and $\{1,-1\}$ valued label as output to classify whether or not a cut is valuable (see Section~\ref{subsec:classifierconstruct}).

\begin{figure}[htbp]
    \centering
    \includegraphics[width=0.9\textwidth]{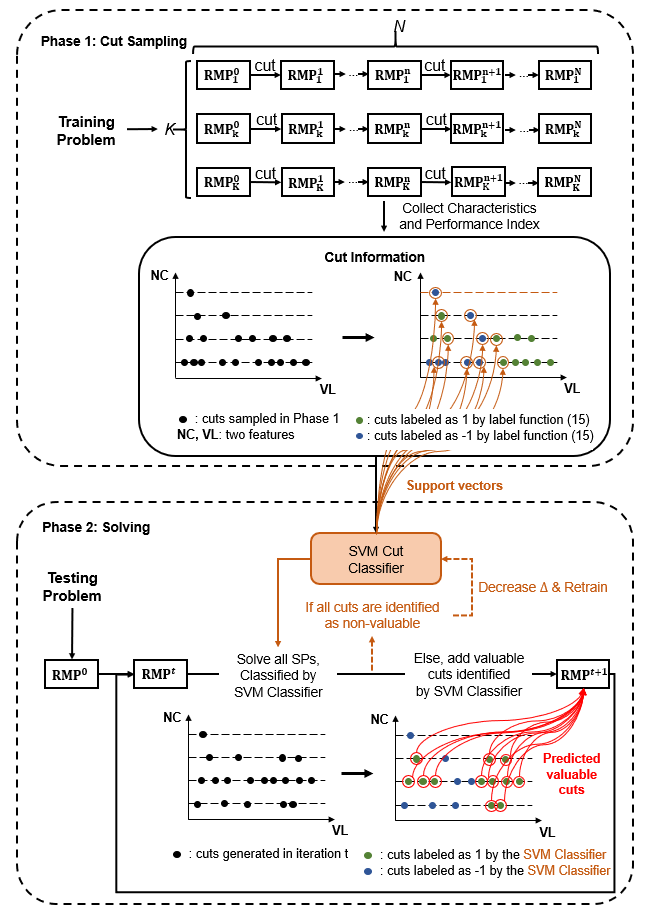}
\caption{An overview of LearnBD procedures. In Phase 1, we collect cuts from training problems and label them as valuable cuts (green points) and non-valuable cuts (blue points). Then, we train an SVM classifier and use it to classify the new cuts we meet in Phase 2 when solving the testing problem. In Phase 2, we only include the valuable cuts identified by the SVM classifier in each iteration.}
    \label{figure:3-1}
\end{figure}

%Those three phases will be presented in detail in the rest of this section.

\subsection{Phase 1: Cut Sampling}\label{subsec:cut-sampling} 

In Phase 1, we conduct cut sampling from some training problem to collect the information of valuable cuts, which will then be used to train the classifier in Phase 2.
The training data set $D$ can be viewed as a $D_{row} \times D_{col}$ matrix, where each row is the information of a specific sample cut, the first $D_{col} - 1$ columns are cut characteristics, and the last column is a $\{-1,1\}$ valued label.

\paragraph{Characteristics.}
Cut characteristics are features of a cut that can help us predict the performance of the cut in future iterations if it is added to the current RMP.
We consider the following two characteristics.
The first is cut violation at the current solution $(\hat{x}^t, \hat{\theta}^t)$ of $\textrm{RMP}^t$, denoted by VL and it can be computed as $\pi_{\omega}^T (h_{\omega} - T_{\omega} \hat{x}^t) - \hat{\theta}^t_{\omega} $, according to \eqref{eq:optcut}. 
This characteristic reflects how large the feasible region of $\textrm{RMP}^t$ can be cut off if adding the cut.
The second characteristic is related to the scenario where a cut is generated from.
We denote the number of cuts generated by the same scenario in previous iterations as NC. 
This characteristic reflects the trade-off between exploration and exploitation, two typical learning strategies. 
A preference to a cut whose associated scenario generates more cuts in previous iterations, links to an exploitation strategy, while the opposite preference leads to an exploration strategy. 
On the one hand, a large number of cuts generated from the same scenario shows that this scenario is crucial for identifying an optimal solution. 
However, it could be the case where the majority of valuable cuts in this scenario have already been generated. Thus, the change of the objective value of RMP brought by a new cut from the same scenario can be small. 
Therefore, the relationship between NC and future performance of a cut is highly possible to be nonlinear. 
A collection of characteristics of one specific cut is referred to as an observation $\mathbf{o}$, with $\mathbf{o} = (\textrm{VL}, \textrm{NC})$.

\paragraph{Performance index.}
In the training data set, each observation also needs to be assigned a label $l$, where $1$ is assigned to valuable cuts and $-1$ is assigned to non-valuable cuts.
Therefore, we define a performance index of each cut and then transform it into $\{-1,1\}$ valued label. 
We choose the change amount of the objective value of $\textrm{RMP}^t$ before and after adding a cut as the performance index of the cut, denoted by PI. 
We add exactly one cut to $\textrm{RMP}^t$ each time to recognize the change of objective value brought by the cut. 
In practice, users can customize the characteristics and performance index according to specific applications.
The rule for transforming the performance index will be discussed after we introduce sampling paths next.

\paragraph{Sampling path.}\label{para:sp}
We construct sampling paths to guide the cut sampling process to record the cut characteristics and performance index.
The number of sampling paths $K$ and the length of sampling path $N$ are pre-determined hyperparameters.
In each sampling path $k, \ k = 1, \ldots, K$, we start with $\textrm{RMP}^0_k$, which is initialized by $\textrm{RMP}^0$.
In iteration $n$ of a sampling path $k$, for $ n = 0, \ldots, N-1, \ k = 1, \ldots, K$, we solve $\textrm{RMP}^n_k$ and obtain an optimal solution $(\hat{x}^n_k, \hat{\theta}^n_k)$.
Then we follow the Monte Carlo sampling approach to randomly sample one scenario $\omega \in \Omega$ and solve the corresponding $\textrm{SP}_{\omega}$ by plugging in $(\hat{x}^n_k, \hat{\theta}^n_k)$:
(i) if no optimality cut is generated, then we continue sampling another scenario and solving the corresponding SP; 
(ii) if an optimality cut is generated, we record the two characteristics, instantly add the cut to $\textrm{RMP}^{n+1}_k$, and then record the performance index.
%Thus, once we find a valid cut, we generate one observation with a performance index and add it to the training set $D$ as a row, see Step~\ref{Line:14} in Algorithm~\ref{alg learn1}.
%Then we continue with RMP after adding new cut or cuts, implement the same operations to generate training data.
Similar to Section~\ref{subsec:bd}, we use $V^{\omega, n}_k$ to denote the set of identified extreme points of the feasible region of $\textrm{SP}_{\omega}$ in iteration $n$ of sampling path $k$, and $\textrm{RMP}^n_k$ is defined in the following form:
\begin{equation} \label{eq:rmp_k^n}
\begin{aligned}
(\textrm{RMP}^n_k) \ \   & \min_{x \in \mathcal{X},\ \theta} \ \ c^Tx + \sum_{\omega \in \Omega} p_{\omega} \theta_{\omega} & \\
& \ \textrm{s.t.}   \ \ \   \theta_{\omega} \geq (h_{\omega} - T_{\omega}x)^T \nu_{\omega} & \ \ \omega \in \Omega, \ & \ \nu_{\omega} \in V^{\omega,n}_k.\\
\end{aligned}
\end{equation}
Once a new cut is generated, we move one step forward in one sampling path and therefore the iterative process stops after reaching $\textrm{RMP}^N_k$ in sampling path $k, \ k = 1, \ldots, K$.

 Through cut sampling, we collect $ \Gamma = N \times K$ number of training data. A larger set of training data in general leads to a more precise classifier.
Larger $N$ means that we can collect information of more representative cuts because we solve RMPs in a wider range of problem sizes. 
Different independent sampling paths can be conducted in parallel, and therefore, larger $K$ will not significantly increase the time of Phase 1.
However, the cuts generated by RMPs with similar sizes can share similar characteristics.
These similar inputs can also lead to over-fitting and can eventually weaken the power of the classifier.
In our later computational studies, we choose $N=2 \times |\Omega|$ and $K=2$.

\begin{remark}\label{remark:indpath}
The cut sampling process is independent across all sample paths, and thus the cut information collected in different sampling paths is independent of one another.
\end{remark}

\begin{remark}\label{remark:phase 1 time}
These sampled cut information can be re-used in Phase 2 when solving different testing problems and thus the time of Phase 1 does not affect the total solving time of testing problems.
\end{remark}

\begin{algorithm}
 \caption{Phase 1 of the LearnBD algorithm. }
 \label{alg:learn1}
 \begin{algorithmic}[1] 
 \STATE \textbf{Input:}
            a two-stage stochastic program with a set $\Omega$ of scenarios; values of  $N$, $K$, $\Delta$.
 \STATE \textbf{Initialize:}  
            $\textrm{RMP}^0$ and 
            $\textrm{SP}_{\omega}$, $\forall \omega \in \Omega$  of the training problem,
            $D  \leftarrow \emptyset$.
 \FOR {$k = 1, \ldots, K$}   \label{Line:2}
     \STATE \textbf{Initialize}:  
            $V^{\omega,0}_k \leftarrow  \emptyset, \ \forall \omega \in \Omega$, 
            $\textrm{NC}_{\omega} = 0, \ \forall \omega \in \Omega$,
            $\textrm{RMP}^0_k \leftarrow \textrm{RMP}^0$ ,
            $D_{\textrm{temp} } \leftarrow \emptyset$.
     \STATE Solve $\textrm{RMP}^{0}_k$, obtain an optimal solution $\{ \hat{x}^0_k, \hat{\theta}^0_k \}$ with optimal objective value $\hat{z}^0_k$.
     \FOR { $n = 0, \ldots, N-1$} 
          
         %\STATE \textbf{Initialize}:
                %$ \textrm{VL} \leftarrow  0, 
                %\ \textrm{PI} \leftarrow  0  $.
         \STATE\label{Line:7} Randomly select $\omega' \in \Omega$, solve $\textrm{SP}_{\omega'}$, obtain an optimal solution $ \pi_{\omega'}$ and its objective value $\zeta_{\omega'}$.
         \IF { $(\hat{\theta}^n_{k})_{\omega'} < \zeta_{\omega'}$} 
                \STATE  $V^{\omega,n+1}_k \leftarrow V^{\omega,n}_k \cup \{ \pi_{\omega'} \}$ , $\textrm{NC}_{\omega'} \leftarrow  \textrm{NC}_{\omega'} + 1$, 
                        $\textrm{VL} \leftarrow  \zeta_{\omega'} - (\hat{\theta}^n_{k})_\omega $;
                \ELSE  \STATE Go to Step~\ref{Line:7}.
                \ENDIF 
         \STATE\label{Line:13} Solve $\textrm{RMP}^{n+1}_k$, obtain an optimal solution $\{ \hat{x}^{n+1}_k, \hat{\theta}^{n+1}_k \}$ with optimal objective value $\hat{z}^{n+1}_k$, 
                $\textrm{PI} \leftarrow  |\hat{z}^{n+1}_k - \hat{z}^{n}_k| $,
                %$\bar{z}  \leftarrow \tilde{z}$.
         \STATE\label{Line:14} $(D_{\textrm{temp}})_{n+1,.} \leftarrow ( \textrm{VL}, \textrm{NC}_{\omega'}, \textrm{PI}  )$ .
     \ENDFOR
     \STATE $D \leftarrow D \cup ( (D_{\textrm{temp}})_{N,1}, (D_{\textrm{temp}})_{N,2}, 1)$
    \FOR { $n = N-1, \ldots, 1$}
        \IF {$ \frac{(D_{\textrm{temp}})_{n,3}}{(D_{\textrm{temp}})_{n+1,3}}  < \Delta$}
            \STATE $D \leftarrow D \cup ( (D_{\textrm{temp}})_{n,1}, (D_{\textrm{temp}})_{n,2}, -1)$
        \ELSE  \STATE $D \leftarrow D \cup ( (D_{\textrm{temp}})_{n,1}, (D_{\textrm{temp}})_{n,2}, 1)$
        \ENDIF
    \ENDFOR
 \ENDFOR
 \RETURN $D$
 \end{algorithmic}
 \end{algorithm}

\subsection{Subroutine in Phase 2: Classifier Construction}\label{subsec:classifierconstruct} 

 We introduce a subroutine in Phase 2, i.e., constructing SVM classifiers with training data $D$, to predict the potential performance of cuts and identify valuable cuts. 
As mentioned in Section~\ref{subsec:cut-sampling}, $D$ can be presented as a collection of observations and labels of sample cuts, where $D = \{ (\mathbf{o}_d, l_d) , \ d = 1, \ldots, \Gamma \}$ and more specifically each cut observation $\mathbf{o} = (\textrm{VL}, \textrm{NC})$.
In Section~\ref{subsubsec:svm}, we show how SVM works and how to estimate the parameters with training data. 
In Section~\ref{subsubsec:svmad}, we discuss the advantages of SVM as a cut classifier.
 
\subsubsection{Building Cut Classifier Using Support Vector Machines (SVM).}\label{subsubsec:svm} 

SVM is a well-known supervised machine learning approach \citep[see][]{cortes1995support,vapnik1998statistical,vapnik1999overview,vapnik2013nature} and has been used for analyzing data in many applications. 
Given a training data set $D = \{(\mathbf{o}_d, l_d), \ d = 1, \ldots, \Gamma \}$ from Phase 1, where $\mathbf{o}_d \in \mathbb{R}^\Sigma$ is an observation (in our problem $\Sigma = 2$), a subset of training data are identified as support vectors  after the training process.
% support vectors set $S = \{ (\mathbf{o}_s, l_s) \}$, 
Parameterized by a coefficient vector $\mathbf{a} \in \mathbb{R}^{\Gamma}$, and an intercept $b \in \mathbb{R}$, the SVM classifier $f_{SVM}(\cdot): \mathbb{R}^{\Sigma} \rightarrow \{-1,1\}$ for a new observation $\mathbf{o}' \in \mathbb{R}^{\Sigma} $ (i.e., the collected information of a specific cut in our problem), is given by
\begin{equation}\label{eq:predict}
f_{SVM}(\mathbf{o'}) = \textrm{sign} \bigg[\sum_{d=1}^{\Gamma} l_d  a_d K(\mathbf{o'}, \mathbf{o}_d) + b \bigg],
\end{equation}
where $K(\cdot,\cdot): \ \mathbb{R}^{\Sigma} \times \mathbb{R}^{\Sigma}  \rightarrow \mathbb{R}$ is a predetermined kernel function. 
 Here we use one of the most popular kernel functions, the Radial Basis Function (RBF), in which $K(\mathbf{o}_1, \mathbf{o}_2) = \textrm{exp} (-\gamma \| \mathbf{o}_1 \cdot \mathbf{o}_2 \|^2 )$.

\paragraph{Label transformation.}
We define a label transformation function to transform continuous performance index into $\{ -1, 1 \}$ label, where $1$ indicates a valuable cut. 
 The nature of the convergence of Benders decomposition is accompanied by the fact that the change of the objective value of RMPs is decreasing over iterations.
Therefore, we treat the cuts that can bring a large enough proportion of PI of the next cut in the same sampling path as valuable cuts.
We directly assign label $1$ to the last cut of each cut sampling path.
For other cuts, we calculate the ratio of its PI and the PI of the next cut in the same sampling path and then compare the ratio with a pre-determined threshold $\Delta \in [0,2]$.
The label transformation function is defined as:
\begin{equation}\label{eq:label}
l^n_k = 
\left\{
\begin{aligned}
& -1, \ \ \ & \textrm{if } \frac{\textrm{PI}^n_k}{\textrm{PI}^{n+1}_k} < \Delta  \\
& 1,   & \textrm{otherwise,} \ \ \ \ 
\end{aligned}
\right.
  \ \ n = 0, \ldots, N-1, \ k = 1, \ldots, K
\end{equation}
Larger $\Delta$ shows a more strict rule for recognizing a cut as a valuable cut.
With this label transformation function,  one can calculate all labels using current performance indices and use these labels to train an SVM classifier.  We present the algorithmic details in Algorithm~\ref{alg:learn1}.

\begin{remark}
Label transformation function eliminates a degree of dependency across cuts generated in the same sampling path. Together with Remark \ref{remark:indpath}, all training data are independent with each other.
\end{remark}

The label prediction function $f_{SVM}(\cdot)$ can be interpreted as follows.
We can treat the coefficient $a_d$ as a significance-magnitude of the corresponding data point $d=1,\ldots, \Gamma$, because the label $l_d$ is always shown in $a_d \times l_d$ in the predicting process and $a_d \times l_d$ as a whole indicates the power of data point $d$ for classifying new cuts.
The kernel function $K(\mathbf{o'}, \mathbf{o}_d)$ presents the similarity between the characteristics of a new cut $\mathbf{o}'$ and cut $\mathbf{o}_d$.
Then the label of $\mathbf{o}'$ is the sign of a sum of magnitude-adjusted label of all training data points plus an intercept $b$.
Then by eliminating the training data with zero estimated coefficients $a_d$, the remaining training data form the support vector set $S$, which is a subset of $D$, and function \eqref{eq:predict} can be simplified as
\begin{equation}\label{eq:predict2}
f_{SVM}(\mathbf{o'}) = \textrm{sign} \bigg[\sum_{\mathbf{s} \in S} l_s  a_s K(\mathbf{o'}, \mathbf{o}_s) + b \bigg].
\end{equation}

The parameters $\{ S, \mathbf{a}, b \}$ can be trained by minimizing the prediction loss function among training data as well as maximizing the flatness of the boundary between valuable and non-valuable cuts. 
The prediction loss is computed by the hinge loss function to improve the model sparsity. 
Given the estimated result $u = \sum_{d =1}^{\Gamma} l_d  a_d K(\mathbf{o'}, \mathbf{o}_d) + b$ from Equation \eqref{eq:predict} and the ground truth label $l$, the loss is calculated by:
\begin{equation}
\textrm{Loss}(u,l) = \max (0, \ 1 - l\cdot u ).
\end{equation}
It can be seen that when $l$ and $u$ have the same sign and $|u| \geq 1$, the loss $= 0$; otherwise the loss $= |u - l|$.
 For brevity, we elaborate on the training process of SVM in Appendix \ref{app:svm}.

\begin{remark}
The penalty hyperparameter $C$ balances the explanatory and predictive power of the classifier. 
In general, a larger $C$ shows a smaller tolerance of prediction error within the training dataset and hence results in a classifier with higher explanatory power while too large $C$ will destroy the predictive power. 
The discount rate $\gamma$ in RBF kernel determines the magnitude of similarity between observations, which is related to the model sensitivity and convergence property.
Proper $(\gamma, C)$ will generate a relatively small number of support vectors with an accepted classification accuracy. 
 The classification accuracy is defined as the percentage of the given cuts whose predicted labels are the same as the input labels.
In our later computational studies, the classification accuracy of classifiers on the training data is almost $100\%$.
Those two hyperparameters are generally selected together via cross-validation and grid search to reach the best empirical performance.
 Typically, the larger $C$ we use, the more support vectors can be identified by the classifier, and thus the classification effort will increase. The classification accuracy on the training data can be improved, while the accuracy on unseen testing data can be impaired.

\end{remark}

\subsubsection{Reasons for Choosing SVM.}\label{subsubsec:svmad}  

The advantage of using support vector type of methods for cut classification is threefold. 
Firstly, with the help of the hinge loss function, SVM only selects representative observations from the training data. Those observations are referred to as support vectors and are stored for future classification. This sparse nature increases the computational speed for evaluating new cuts.
Secondly, the mechanism of SVM can be explained by using the similarity between a new cut and all support vectors to predict future performance, which is consistent with our assumptions and motivation that valuable cuts share similarities. Furthermore, the kernel-based method can flexibly help capture the nonlinear relationship between cut performance and characteristics.
Thirdly, the solving process of SVM is a convex optimization problem (see model (SVM-P) in \eqref{eq:svmoptprob}) which is computationally tractable. 
% performance index transformation

Another support vector type of learning method is support vector regression (SVR) \citep[see, e.g.,][]{Smola2004A}.  The main idea of those two approaches is similar. SVM classifies cuts by $\{1, -1\}$ labels and works as a classifier. SVR evaluates continuous scores of cuts and works as a regressor, which is more informative than a classifier because it can distinguish more rank levels and also allows any fractional rank between levels. We choose SVM over SVR following concerns listed as follows.  Indeed, cuts have different levels of effectiveness for improving RMP solutions. However, we choose not to spend time and effort to fully distinguish between those levels. Recall that the geometric explanation of Benders decomposition is to cut off the feasible region of $\textrm{RMP}^t$ in each iteration $t$. The cuts generated in one iteration can be linearly independent with each other, and thus they cut the feasible region from different directions. Therefore, it is better to include several valuable cuts rather than only one cut in each iteration. On the other hand, since we do not select the cuts with relatively low effectiveness, we do not even need to distinguish among those non-valuable cuts. A similar reason is also mentioned in the learning approach used for a branch-and-bound algorithm by \citet{khalil2016learning}.

%\paragraph{SVM versus and neural networks.}
\citet{rl-tang2019reinforcement} employ a neural network for selecting cuts for solving integer programming models. 
One advantage of utilizing a neural network is the complex and high-dimensional data it can handle and process. 
The information we use contains coefficients of the current cut and those of all added cuts. If we use a neutral  network, it can evaluate each cut adaptively to the solving process.
In LearnBD, we achieve this iteration-adaptive property by allowing retraining of the cut classifier (see Remark~\ref{remark:retrain}).
The training time of SVM classifier is much shorter than that of neural networks, and the classifier can be trained before solving the testing problems (see Remark~\ref{remark:phase 1 time} and Remark~\ref{remark:compu retraining times}).

\subsection{Phase 2: Cut Classification}\label{subsec:functionapply} 

\paragraph{Iteration rule.}
 In Phase 2, we solve a given two-stage model in the form of \eqref{eq:1}, which is also referred to as the testing problem.
In iteration $t$, we solve $\textrm{RMP}^{t}$ of the testing problem and obtain an optimal solution $(\hat{x}^t, \hat{\theta}^{t})$; 
by plugging in $(\hat{x}^t, \hat{\theta}^{t})$, we solve all $\textrm{SP}_{\omega}, \ \forall \omega \in \Omega$ of the testing problem and record the two characteristics of each generated cut.
Using the characteristic information, the SVM classifier assigns label $1$ to valuable cuts and $-1$ to non-valuable cuts (see Section~\ref{subsec:classifierconstruct}). 
Then we add all valuable cuts with label $= 1$ to $\textrm{RMP}^{t+1}$. 
We use $\overline{V}^{\omega,t}$ to denote the identified extreme points of the feasible region of $\textrm{SP}_{\omega}$ in iteration $t$ in Phase 2.
We repeat adding cuts until the optimality gap between upper bound and lower bound, which is defined in \eqref{eq:relativedif}, is less than a pre-specified tolerance $\delta$. 
If no cut is labeled $1$ by SVM classifier, but we have not reached the optimal tolerance, then we retrain the cut classifier with a smaller $\Delta$ and continue iterating. We define a decreasing list $\mathcal{L}_{\Delta}$ as potential values of $\Delta$, and retrain the SVM classifier by plugging in $\Delta = \mathcal{L}_{\Delta}(l)$ in the $l^{\textrm{th}}$ retraining. 
The algorithmic details of Phase 2 are presented in Algorithm~\ref{alg:learn3}.

\begin{remark}\label{remark:retrain}
As mentioned in Section~\ref{subsubsec:svmad}, we allow retraining to achieve the iteration-adaptive property of Benders decomposition.
If we define the state of the two-stage optimization problem as the set of added cuts to RMP, then the number of possible states is extremely large because the new cuts are also directly affected by the previous solving trajectory.
Thus, it is highly likely that the state we see during the solving process may not have been encountered during the training data collection process and consequently, the relationship between the cut features and valuable labels may not reflect the true relationship. 
In the context of machine learning, this problem, induced by encountering unseen data points, is also defined as \textit{distribution shift}.
When distribution shift happens, the previous classifier does not work anymore for predicting the labels of cuts generated in an unseen state.
Actually, distribution shift may happen in several machine learning algorithms while solving sequential decision-making problems, such as algorithms based on behavioral cloning in Imitation Learning \citep[see, e.g.,][]{didi-kdd-pomerleau1991efficient}, and thus several studies focus on remedying distribution shift \citep[see, e.g.,][]{didi-kdd-ross2011reduction,didi-kdd-reddy2019sqil}.
In this paper, we propose to mitigate distribution shift by retraining the SVM classifier with decreasing $\Delta$ in \eqref{eq:label}, or equivalently, we enforce a less strict standard for valuable cuts in later iterations.
\end{remark}

\begin{remark}\label{remark:train time}
In algorithmic steps, the SVM classifier is retrained several times in Phase 2.
In practical implementation, one can train classifiers with different values of $\Delta$ before starting Phase 2 and call classifiers with the specific $\Delta$-value when solving a problem. Thus, these classifiers can be re-used and the training time of classifiers does not affect the total solving time of testing problems.
 We summarize the numerical performance of retraining in Remark \ref{remark:compu retraining times} in Section~\ref{subsec:results cmnd}.
\end{remark}

 \begin{algorithm}
 \caption{Phase 2 of the LearnBD algorithm. }
 \label{alg:learn3}
 \begin{algorithmic}[1] 
 \STATE \textbf{Input:} $\textrm{RMP}^0$ and $\textrm{SP}_{\omega}, \forall \omega = 1, \ldots, \Omega$ of the testing problem, value of $\delta$.
 \STATE \textbf{Initialize:} set of generated cuts $\overline{V}^{\omega,0} \leftarrow \emptyset$ and number of cuts $ \overline{\textrm{NC}}_{\omega} \leftarrow \emptyset , \ \forall \omega \in \Omega $, list $\mathcal{L}_{\Delta}$, $l=1$, train an SVM classifier with $\Delta = \mathcal{L}_{\Delta}(l)$.
\STATE Solve $\textrm{RMP}^0$, obtain an optimal solution $\{ \hat{x}^0, \hat{\theta}^0 \}$ and optimal objective $\hat{z}^0$, $t\leftarrow 0$, $\textrm{UB}  \leftarrow +\infty$,  LB$ \leftarrow \hat{z}^0$.
\WHILE{ $\frac{\textrm{UB} - \textrm{LB}}{  \textrm{LB}} > \delta$} \label{step:while}
\STATE $n_{cut} \leftarrow 0$.
            \FOR{ $\omega \in \Omega$ } 
                \STATE Solve $\textrm{SP}_{\omega}$, obtain an optimal solution $\pi_{\omega}$ and its optimal objective value $\zeta_{\omega}$.
                \IF { $(\hat{\theta}^t)_\omega < \zeta_{\omega}$} 
                    %\IF { \textrm{AUI} == 1 } \label{line:switch-s}
                    %\STATE\label{line:switch-e} $V_\omega^{t+1} \leftarrow V_\omega^t \cup \{ \pi_{\omega} \}$;
                    %\ELSE
                        \STATE\label{line:cla-s}  $\textrm{VL} \leftarrow \zeta_{\omega} - (\hat{\theta}^t)_\omega $.
                        \STATE Input  $  \{ \textrm{VL}, \overline{\textrm{NC}}_{\omega} \}$ into SVM classifier.
                        \IF{ Predicted label is $1$} 
                            \STATE  $\overline{V}^{\omega,t+1} \leftarrow \overline{V}^{\omega,t} \cup \{ \pi_{\omega} \}$; $n_{cut}   \leftarrow n_{cut}   + 1$; $\overline{\textrm{NC}}_{\omega} \leftarrow \overline{\textrm{NC}}_{\omega} + 1$.
                        \ENDIF \label{line:cla-e}
                    %\ENDIF
                 \ENDIF 
            \ENDFOR
\IF {$n_{cut} = 0$} \STATE  $l=l + 1$, train an SVM classifier with $\Delta = \mathcal{L}_{\Delta}(l)$; Continue.\ENDIF
\STATE UB $\leftarrow \min \{ \hat{z}^t + \sum_{\omega \in \Omega} \zeta_\omega\}$.
\STATE $t \leftarrow t+ 1$, re-solve $\textrm{RMP}^{t}$, obtain an optimal solution $\{ \hat{x}^{t}, \hat{\theta}^t \}$, optimal objective $\hat{z}^t$.
\STATE $\textrm{LB} \leftarrow \max \{ LB, \hat{z}^t\}$.
 \ENDWHILE
\RETURN Optimal solution $\{ \hat{x}^t, \hat{\theta}^t \}$, optimal objective value $ \hat{z}^t$.
 \end{algorithmic}
 \end{algorithm}

\section{Numerical Studies}\label{sec:compu}

We evaluate LearnBD on two classes of stochastic programs: (i) Capacitated facility location problem (CFLP) and (ii) fixed charge multi-commodity network design problem (CMND). 
CFLP contains binary first-stage variables and continuous second-stage variables with complete recourse.
CMND contains binary first-stage variables and continuous second-stage variables. The traditional formulation of CMND \citep[see, e.g.,][]{crainic2014partial} also involves feasibility cuts in the Benders decomposition. We introduce auxiliary variables in the formulation so that the complete recourse assumption holds. These problems naturally appear in many applications \citep[see, e.g.,][]{melkote2001capacitated,klibi2010design,klibi2012scenario}, and they are notoriously hard to solve \citep[see, e.g.,][]{geoffrion1974multicommodity,birge2011introduction,crainic2001bundle,crainic2011progressive}.

Section~\ref{subsec:instance setup} describes our experimental design and computational settings. Section~\ref{subsec:results svm} shows prediction accuracy of the SVM Classifier over different validation sets.
Section~\ref{subsec:pcstat} presents the overall results of diverse-sized instances and Section~\ref{subsec:pciteration} presents detailed computational results over iterations.
Section~\ref{subsec:training time} presents the time spent on Phase 1 and on training SVM classifiers.
Section~\ref{subsec:transfer} provides results of LearnBD using a classifier trained with cut information collected from another instance.
All the tests are performed on a computer with an Intel Core E5-2630 v4 CPU 2.20 GHz and 128 GB of RAM.

\subsection{Experimental Setup and Test Instances}\label{subsec:instance setup}

\subsubsection{Capacitated Facility Location Problem (CFLP).}

Consider a set $W$ of production plants (facilities) and a set $F$ of factories which have uncertain demand $\tilde{d}$.
The setup cost of facility $i, \ \forall i \in W$ is $k_i$ and the production capacity limit is $u_i$.
The demand of factory $j, \forall j \in F$ is uncertain and can be satisfied by products produced in facility $i, \ \forall i \in W$ if it is open with a unit transportation cost $c_{ij}$, and the unmet demand will generate lost-sale with a unit penalty cost $\rho_j$.
One needs to decide a subset of facilities to open before the realization of the demand to minimize the expected total cost.
We provide the details about RMP and SP formulations in Appendix~\ref{app:cflp}.

For our studies, we use problem sets IV and VI in \citet{beasley1988algorithm}, which are originally from \citet{akinc1977efficient} and \citet{christofides1983extensions}. 
Each problem set uses the same network and identical capacity among facilities. Table~\ref{table:fl-ins} summarizes the attributes of the instances, which are originally proposed for the deterministic capacitated facility location problem, and we apply the techniques in \citet{song2014chance} for sampling scenarios. The demand $\tilde{d}_j$ of factory $j$ in each scenario $\omega$ follows a Normal distribution with mean equal to the demand used in the original deterministic instances and standard deviation equal to $0.1 - 0.2$ times the mean.
\begin{table}[htbp]
  \centering
  \caption{Instance Attributes of CFLP}
    \begin{tabular}{ccccc}
    \toprule
     Problem Set &  $|W|$ & $|F|$  & Capacity $u$ & Setup Cost $k$  \\
    \midrule
    IV & 16 & 50 & 5000 & 7.5 / 12.5 / 17.5 / 25\\
 %   V & 16 & 50 & 10000 & 17.5\\
     VI & 16 & 50 & 15000 &  12.5 \\
 %   VI & 16 & 50 & 15000 & 7.5 / 12.5 / 17.5 / 25\\
 %   VII & 16 & 50 & 58268 & 7.5 / 12.5 / 17.5 / 25\\
 %   VIII & 25 & 50 & 5000 & 7.5 / 12.5 / 17.5 / 25\\
 %   IX & 25 & 50 & 15000 & 7.5 / 12.5 / 17.5 / 25\\
  %  X & 25 & 50 & 58268 & 7.5 / 12.5 / 17.5 / 25\\
  %  XI & 50 & 50 & 5000 & 7.5 / 12.5 / 17.5 / 25\\
 %   XII & 50 & 50 & 15000 & 7.5 / 12.5 / 17.5 / 25\\
  %  XIII & 50 & 50 & 58268 & 7.5 / 12.5 / 17.5 / 25\\
    \bottomrule
    \end{tabular}%
  \label{table:fl-ins}%
\end{table}%

\subsubsection{Multi-commodity Network Design Problem (CMND).}

Consider a directed network with node set $N$, arc set $A$, and commodity set $K$. An uncertain $\tilde{v}_k$ amount of commodity $k, \forall k \in K$ must be routed from an origin node, $o_k \in N$, to a destination node, $d_k \in N$. 
The installation cost and arc capacity of arc $(i,j),\ \forall (i,j) \in A$ are $f_{ij}$ and $u_{ij}$, respectively.
The cost for transporting one unit of commodity $k, \ \forall k \in K$ on installed arc $(i,j),  \ \forall (i,j) \in A$ is $c_{ij}^k$. 
One needs to decide a subset of arcs to install before the realization of the demand to minimize the expected total cost.
We provide details about the RMP and SP formulations in Appendix~\ref{app:cmnd}.

We use the problem sets in \citet{crainic2014partial}, i.e., five problem sets (IV--VIII) from the set of R instances in \citet{crainic2011progressive}. Each problem set uses the same network, with parameters of each network shown in Table~\ref{table:mc-ins}.
The instances were originally proposed for the deterministic fixed charge multi-commodity network design problem \citep{crainic2001bundle}.
We apply techniques in \citet{song2014chance} to generate random samples. 
The demand $\tilde{v}_k$ of commodity $k$ in each scenario $\omega$ follows a Normal distribution with mean equal to the demand in the deterministic instances and standard deviation equal to $0.1$--$0.2$ times the mean.
\begin{table}[htbp]
  \centering
  \caption{Instance Attributes of CMND}
    \begin{tabular}{cccc}
    \toprule
     Problem Set &  $|N|$ & $|A|$  & $|K|$  \\
    \midrule
    IV & 10 & 60 & 10\\
    V & 10 & 60 & 25\\
    VI & 10 & 60 & 50\\
    VII & 10 & 82 & 10\\
    VIII & 10 & 83 & 25\\
    \bottomrule
    \end{tabular}%
  \label{table:mc-ins}%
\end{table}%

\subsection{Performance of the SVM Classifier}\label{subsec:results svm}

We first take Instance cap41 of CFLP and Instance r082 of CMND as examples to demonstrate the performance of the SVM classifier.
We present the prediction accuracy for cuts in each data set in Table~\ref{tab:svm accuracy}.
For each instance, we present the accuracy of five sets.
The first set is the training data set, which contains the cuts sampled in Phase 1.
We create four validation data sets consisting of unseen cuts.
To compute the prediction accuracy, we need to compute the ``true labels'' of cuts in the validation set by the label transformation function~\eqref{eq:label}. Therefore, all the cuts in validation sets are collected in the same way as that of Phase 1 (see Algorithm~\ref{alg:learn1}), but using different parameters (as shown in Table~\ref{tab:svm accuracy}).
The validation data set $1$ shares the same parameter as those of the training data set.
The validation data set $2$ uses the twice standard deviation as that of the training data set to generate realizations of the uncertain parameters for the optimization model in different scenarios.
The validation data set $3$ doubles the number of sampling paths, which can be equivalently viewed as a set sharing the same property with validation data set $1$ but with a twice larger size.
The validation data set $4$ uses a larger length of the sampling path, which can be viewed as a set containing more types of cuts, i.e., the validation data set $4$ also includes cuts generated in later iterations in addition to the cuts in the training data set.
\begin{table}[htbp]
  \centering
  \caption{Prediction Accuracy of the SVM Classifier}
     \resizebox{\textwidth}{!}{
    \begin{tabular}{lcccccccccc}
    \toprule
    \multicolumn{1}{c}{\multirow{4}[4]{*}{Inst.}} & \multicolumn{10}{c}{Prediction Accuracy ($ \%$)} \\
\cmidrule{2-11}          & \multicolumn{2}{c}{Training Data} & \multicolumn{2}{c}{Validation Data 1} & \multicolumn{2}{c}{Validation Data 2} & \multicolumn{2}{c}{Validation Data 3} & \multicolumn{2}{c}{Validation Data 4} \\
          & \multicolumn{2}{c}{Std $= 0.1$} & \multicolumn{2}{c}{Std  $= 0.1$} & \multicolumn{2}{c}{Std  $= 0.2$} & \multicolumn{2}{c}{Std  $= 0.1$} & \multicolumn{2}{c}{Std  $= 0.1$} \\
          & \multicolumn{1}{c}{$K=2$} & \multicolumn{1}{c}{$K=2$} & \multicolumn{1}{c}{$K=2$} & \multicolumn{1}{c}{$K=2$} & \multicolumn{1}{c}{$K=2$} & \multicolumn{1}{c}{$K=2$} & \multicolumn{1}{c}{$K=4$} & \multicolumn{1}{c}{$N=200$} & \multicolumn{1}{c}{$K=2$} & \multicolumn{1}{c}{$N=300$} \\
    \midrule
    cap41 & \multicolumn{2}{c}{$99.78$} & \multicolumn{2}{c}{78.25} & \multicolumn{2}{c}{67.50} & \multicolumn{2}{c}{75.00} & \multicolumn{2}{c}{78.33} \\
    r082 & \multicolumn{2}{c}{98.15} & \multicolumn{2}{c}{82.95} & \multicolumn{2}{c}{74.90} & \multicolumn{2}{c}{83.57} & \multicolumn{2}{c}{82.10} \\
    \bottomrule
    \end{tabular}%
    }
  \label{tab:svm accuracy}%
\end{table}%

The results in Table \ref{tab:svm accuracy} show that the in-sample prediction accuracy is higher than $98\%$ while the out-of-sample prediction accuracy of the validation data sets is relatively lower.
The out-of-sample prediction accuracy of validation data sets $1$, $2$, and $4$ is at similar levels higher than $75\%$ for both instances, and the prediction accuracy of Instance r082 is higher.
The out-of-sample prediction accuracy of validation data set $2$ is the lowest among the validation sets.
This is because the uncertain model parameters across scenarios of the validation data set $2$ are different from that of the training data set, and thus the generalization power is weaker than other validation sets.
Please note that the ``true labels'' of the validation data sets are computed by \eqref{eq:label}, which are our belief but not the exact classification of valuable or non-valuable cuts.

\subsection{Results of Comparing LearnBD with Benders Decomposition}\label{subsec:pcstat} 

As mentioned in Section~\ref{subsec:research}, the main obstacle of the traditional Benders is the large size of RMPs and, consequently, the long CPU time spent on solving RMPs in each iteration. 
SPs have smaller sizes and linear programming structures, and they can be efficiently solved in parallel.
Thus, the total time for solving a testing problem with Bender's approach is almost the cumulative time for solving RMPs. 
Therefore, we refer to algorithm efficiency as the RMP solving time in the rest of the paper.
In this section, we present the computational results for solving the CFLP and CMND instances with traditional Benders decomposition (BD) and LearnBD. Specifically, we show the number of iterations, optimality gap, number of cuts, and cumulative time of solving RMPs (i.e., the last four columns in Table~\ref{tab:fl-table},  Table~\ref{tab:mc-table}, and Table~\ref{tab:mc-table-precision}).
For LearnBD, we create a training problem for collecting cut information and we re-use this information when solving the testing problems. 
For all instances, we set the initial value of $\Delta$ as $1.2$ and decrease it by $0.01$ for each retraining conducted, i.e., $\mathcal{L}_\Delta= [1.2 - 0.01i]$ for $i=0, \ldots, 50$. 

\subsubsection{Results of CFLP.}\label{subsec:results cflp}

We compare the results of solving CFLP instances using LearnBD and BD in Table~\ref{tab:fl-table}.
Based on the problem size and the computational difficulty, we set the precision parameter $\delta= 0.01\%$ and the time limit as one hour.
The solving process terminates when $\frac{\textrm{UB-LB}}{\textrm{LB}}< \delta$ (see Step~\ref{step:while} in Algorithm~\ref{alg:learn3}) or the cumulative solving time of the RMP reaches the time limit.
\begin{table}[htbp]
  \centering
  \caption{Results of CFLP Instances Solved by LearnBD and BD}
   \resizebox{\textwidth}{!}{
    \begin{tabular}{cccrccrrrr}
    \toprule
    \multicolumn{1}{p{3.855em}}{Problem} & Instance & \multicolumn{1}{p{5.5em}}{Std\_Training} & $|\Omega|$  & \multicolumn{1}{p{5.215em}}{Std\_Testing} & Method &  Number   & {Opt Gap} & Number  & \multicolumn{1}{p{5.215em}}{Total Time} \\
    Set & & ($\times$ mean) & & ($\times$ mean) & & of Iter. & ($\%$) & of Cuts & of RMPs (s)\\
    \midrule
    \multirow{16}[16]{*}{IV} & \multirow{4}[4]{*}{cap41} & \multirow{4}[4]{*}{0.1} & \multirow{4}[4]{*}{100} & \multirow{2}[2]{*}{0.1} & BD    & 40    &  --  & 4000  & 183.48 \\
          &       &       &       &       & LearnBD & 38    & --  & 3790  & 96.31 \\
\cmidrule{5-10}          &       &       &       & \multirow{2}[2]{*}{0.2} & BD    & 47    & --  & 4700  & 187.05 \\
          &       &       &       &       & LearnBD & 45    & --  & 4467  & 141.30 \\
\cmidrule{2-10}          & \multirow{4}[4]{*}{cap42} & \multirow{4}[4]{*}{0.1} & \multirow{4}[4]{*}{200} & \multirow{2}[2]{*}{0.1} & BD    & 30   & --  & 6000  & 111.96 \\
          &       &       &       &       & LearnBD & 32   & --  & 4304  & 84.28 \\
\cmidrule{5-10}          &       &       &       & \multirow{2}[2]{*}{0.2} & BD    & 30    & --  & 6000  & 93.93 \\
          &       &       &       &       & LearnBD & 29    & --  & 5796  & 86.72 \\
\cmidrule{2-10}          & \multirow{4}[4]{*}{cap43} & \multirow{4}[4]{*}{0.1} & \multirow{4}[4]{*}{400} & \multirow{2}[2]{*}{0.1} & BD    & 27    & --  & 10800  & 243.57 \\
          &       &       &       &       & LearnBD &    26 & --  & 10378  & 220.18 \\
\cmidrule{5-10}          &       &       &       & \multirow{2}[2]{*}{0.2} & BD    & 26    & --  & 10400  & 228.66 \\
          &       &       &       &       & LearnBD & 26    & --  & 10369  & 215.07 \\
\cmidrule{2-10}          & \multirow{4}[4]{*}{cap44} & \multirow{4}[4]{*}{0.1} & \multirow{4}[4]{*}{400} & \multirow{2}[2]{*}{0.1} & BD    & 24    & --  & 9600  & 235.34 \\
          &       &       &       &       & LearnBD & 25    & --  & 9938  & 156.52 \\
\cmidrule{5-10}          &       &       &       & \multirow{2}[2]{*}{0.2} & BD    & 22    & --  & 8800  & 172.99 \\
          &       &       &       &       & LearnBD & 22    & --  & 8764  & 134.11 \\
    \midrule
    \multirow{4}[4]{*}{VI} & \multirow{4}[4]{*}{cap62} & \multirow{4}[4]{*}{0.1} & \multirow{4}[4]{*}{50} & \multirow{2}[2]{*}{0.1} & BD    & 258   & 3.26  & 12900 & LIMIT \\
          &       &       &       &       & LearnBD & 215   & 2.54  & 10701  & LIMIT \\
\cmidrule{5-10}          &       &       &       & \multirow{2}[2]{*}{0.2} & BD    & 206   & 3.06  & 10300  & LIMIT \\
          &       &       &       &       & LearnBD &  216  & 2.53  & 10786  & LIMIT \\
    \bottomrule
    \end{tabular}%
    }
  \label{tab:fl-table}%
\end{table}%

In Table~\ref{tab:fl-table}, (i) all instances in problem set IV can be optimized %solved to the pre-determined precision $\delta=0.01\%$
within the one-hour time limit. 
For these instances, the cuts generated by LearnBD are fewer than those of BD; the time of solving RMPs in LearnBD is shorter, and  the total time reduction ranges from $6\%$ (for cap43-0.2) to $47.5\%$ (for cap41-0.1). 
(ii) Instance cap62 is much more difficult to solve compared with other instances in problem set IV. Both BD and LearnBD exceed the one-hour time limit and LearnBD has a smaller optimality gap within one-hour time limit. The number of iterations and cuts of LearnBD are more than those of BD.
It happens that LearnBD executes more iterations because it adds fewer cuts than BD during each iteration.
For the second testing problem of the Instance cap62, LearnBD can solve RMPs with more cuts, which implies that the RMPs of LearnBD is easier to solve than the ones in BD.  In Table~\ref{tab:fl-table-prec}, we present the time and the number of cuts needed by LearnBD to reach the same or smaller optimality gap of BD shown in Table~\ref{tab:fl-table} for solving Instance cap62. Demonstrated in Table~\ref{tab:fl-table-prec}, LearnBD adds fewer cuts and takes much shorter time to achieve similar optimality gap as BD for the hard instance. 
(iii) For each instance, by comparing testing problems with two different standard deviations used for generating demand scenarios, we conclude that the improvement of LearnBD is more significant for testing problems having similar uncertainty distribution as the training problem.
One reason is that, for testing problems with larger standard deviation, the collected cuts in the training data in Phase 1 can be viewed as a subset of the total cut population that LearnBD encounters in Phase~2.
\begin{table}[htbp]
  \centering
  \caption{Comparing LearnBD with BD for Solving Instance cap62 to Similar Accuracy}
   \resizebox{\textwidth}{!}{
    \begin{tabular}{cccrccrrrr}
    \toprule
    \multicolumn{1}{p{3.855em}}{Problem} & Instance & \multicolumn{1}{p{5.5em}}{Std\_Training} & $|\Omega|$  & \multicolumn{1}{p{5.215em}}{Std\_Testing} & Method &  Number   & {Opt Gap} & Number  & \multicolumn{1}{p{5.215em}}{Total Time} \\
    Set & & ($\times$ mean) & & ($\times$ mean) & & of Iter. & ($\%$) & of Cuts& of RMPs (s)\\
    \midrule
    \multirow{4}[4]{*}{VI} & \multirow{4}[4]{*}{cap62} & \multirow{4}[4]{*}{0.1} & \multirow{4}[4]{*}{50} & \multirow{2}[2]{*}{0.1} & BD    & 258   & 3.26  & 12900 & $>$3600 \\
          &       &       &       &       & LearnBD & 168   & 3.12 & 8357  & 1851.10 \\
\cmidrule{5-10}          &       &       &       & \multirow{2}[2]{*}{0.2} & BD    & 206   & 3.06  & 10300  & $>$3600 \\
          &       &       &       &       & LearnBD &  194 & 3.05  & 9686  & 2674.99 \\
    \bottomrule
    \end{tabular}%
    }
  \label{tab:fl-table-prec}%
\end{table}%

\subsubsection{Results of CMND.}\label{subsec:results cmnd}
We present the results of LearnBD and BD for solving CMND instances in Table~\ref{tab:mc-table},
%Similarly, based on the problem size and the computational difficulty, 
where the optimality tolerance $\delta=1\%$ and the time limit is set as two hours.
%The solving process is terminated when $\frac{\textrm{UB-LB}}{\textrm{LB}}< \delta$ (see step~\ref{step:while} in Algorithm~\ref{alg:learn3}) or the cumulative solving time of the RMP reaches the LIMIT.
\begin{table}[htbp]
  \centering
  \caption{Results of CMND Instances Solved by LearnBD and BD}
   \resizebox{\textwidth}{!}{
    \begin{tabular}{cccrccrrrr}
    \toprule
    \multicolumn{1}{p{3.855em}}{Problem} & Instance & \multicolumn{1}{p{5.5em}}{Std\_Training} & $|\Omega|$  & \multicolumn{1}{p{5.215em}}{Std\_Testing} & Method &  Number   & {Opt Gap} & Number  & \multicolumn{1}{p{5.215em}}{Total Time} \\
    Set & & ($\times$ mean) & & ($\times$ mean) & & of Iter. & ($\%$)& of Cuts& of RMPs (s)\\
    \midrule
 \cmidrule{1-5}    \multirow{8}[8]{*}{IV} & \multirow{4}[4]{*}{r041} & \multirow{4}[4]{*}{0.1} & \multirow{4}[4]{*}{80} & \multirow{2}[2]{*}{0.1} & BD    & 269    & 42.19 & 21520  & LIMIT \\
          &       &       &       &       & LearnBD & 248    & 22.88 & 19761  & LIMIT \\
\cmidrule{5-10}          &       &       &       & \multirow{2}[2]{*}{0.2} & BD    & 275    & 20.44 & 22000  & LIMIT \\
          &       &       &       &       & LearnBD & 244   & 18.18 & 19283  & LIMIT \\
\cmidrule{2-10}          & \multirow{4}[4]{*}{r046} & \multirow{4}[4]{*}{0.1} & \multirow{4}[4]{*}{80} & \multirow{2}[2]{*}{0.1} & BD    & 32    & --  & 2560  & 62.30 \\
          &       &       &       &       & LearnBD & 28    & --  & 1708   & 41.70 \\
\cmidrule{5-10}          &       &       &       & \multirow{2}[2]{*}{0.2} & BD    & 28    & --  & 2240  & 111.53 \\
          &       &       &       &       & LearnBD &35    & --  & 2094   & 80.81 \\
    \midrule
    \multirow{8}[8]{*}{V} & \multirow{4}[4]{*}{r051} & \multirow{4}[4]{*}{0.1} & \multirow{4}[4]{*}{100} & \multirow{2}[2]{*}{0.1} & BD    & 185   & 9.79 & 18500 & LIMIT \\
          &       &       &       &       & LearnBD & 155    & 3.66  & 15210  & LIMIT \\
\cmidrule{5-10}          &       &       &       & \multirow{2}[2]{*}{0.2} & BD    & 193   & 10.37 & 19300 & LIMIT \\
          &       &       &       &       & LearnBD & 190   & 8.69  & 18803 & LIMIT \\
\cmidrule{2-10}          & \multirow{4}[4]{*}{r054} & \multirow{4}[4]{*}{0.1} & \multirow{4}[4]{*}{100} & \multirow{2}[2]{*}{0.1} & BD    & 64    & --  & 6400  & 585.944 \\
          &       &       &       &       & LearnBD & 56    & --  & 5398  & 385.89 \\
\cmidrule{5-10}          &       &       &       & \multirow{2}[2]{*}{0.2} & BD    & 85    & -- & 8500  & 1172.40 \\
          &       &       &       &       & LearnBD & 78    & --  & 7041  & 984.71 \\
    \midrule
    \multirow{4}[4]{*}{VI} & \multirow{4}[4]{*}{r061} & \multirow{4}[4]{*}{0.1} & \multirow{4}[4]{*}{100} & \multirow{2}[2]{*}{0.1} & BD    & 121   & 10.54  & 12100 & LIMIT \\
          &       &       &       &       & LearnBD & 125    & 9.01  & 12288  & LIMIT \\
\cmidrule{5-10}          &       &       &       & \multirow{2}[2]{*}{0.2} & BD    & 137   & 10.28  & 13700 & LIMIT \\
          &       &       &       &       & LearnBD & 135    & 9.69 & 13602  & LIMIT \\
    \midrule
    \multirow{12}[12]{*}{VII} & \multirow{4}[4]{*}{r071} & \multirow{4}[4]{*}{0.1} & \multirow{4}[4]{*}{80} & \multirow{2}[2]{*}{0.1} & BD    & 159   & 958.69 & 12720 & LIMIT \\
          &       &       &       &       & LearnBD & 170   & 884.19 & 13194 & LIMIT \\
\cmidrule{5-10}          &       &       &       & \multirow{2}[2]{*}{0.2} & BD    & 154   & 967.37 & 12320 & LIMIT \\
          &       &       &       &       & LearnBD & 167   & 885.94 & 13215 & LIMIT \\
\cmidrule{2-10}          & \multirow{4}[4]{*}{r075} & \multirow{4}[4]{*}{0.1} & \multirow{4}[4]{*}{80} & \multirow{2}[2]{*}{0.1} & BD    & 121   & 10.89  & 9680 & LIMIT \\
          &       &       &       &       & LearnBD & 150   & 9.24  & 9621  & LIMIT \\
\cmidrule{5-10}          &       &       &       & \multirow{2}[2]{*}{0.2} & BD    & 119   & 11.23  & 9520 & LIMIT \\
          &       &       &       &       & LearnBD & 126   & 10.07 & 9350  & LIMIT \\
\cmidrule{2-10}          & \multirow{4}[4]{*}{r076} & \multirow{4}[4]{*}{0.1} & \multirow{4}[4]{*}{80} & \multirow{2}[2]{*}{0.1} & BD    & 38    & --  & 3040  & 360.93 \\
          &       &       &       &       & LearnBD & 52    & --  & 3026  & 276.91 \\
\cmidrule{5-10}          &       &       &       & \multirow{2}[2]{*}{0.2} & BD    & 38    & --  & 3040  & 374.81 \\
          &       &       &       &       & LearnBD & 48    & --  & 3080  & 305.71 \\
    \midrule
    \multirow{4}[4]{*}{VIII} & \multirow{4}[4]{*}{r082} & \multirow{4}[4]{*}{0.1} & \multirow{4}[4]{*}{80} & \multirow{2}[2]{*}{0.1} & BD    & 106    & 9.77  & 8480  & LIMIT \\
          &       &       &       &       & LearnBD & 113    & 9.27  &   8873  & LIMIT \\
\cmidrule{5-10}          &       &       &       & \multirow{2}[2]{*}{0.2} & BD    &104    & 9.91  & 8320 & LIMIT \\
          &       &       &       &       & LearnBD & 106    & 7.13  & 8156  & LIMIT \\
    
    \bottomrule
    \end{tabular}%
    }
  \label{tab:mc-table}%
\end{table}%

In Table~\ref{tab:mc-table}, most CMND instances take longer time than CFLP instances. We have similar observations as in Table \ref{tab:fl-table}: 
(i) Instances r046, r054, and r076 can be optimized within the two-hour time limit and the cumulative solving time of RMPs of LearnBD is significantly less than that of BD. The cuts in LearnBD are fewer than cuts generated by BD for Instances r046 and r076.
(ii) The rest of instances are hard to solve and both LearnBD and BD reach the two-hour time limit before converging. Instance r071 is extremely hard to solve and the optimality gaps of the two testing problems are greater than $100\%$ when reaching time limit.
For all of these instances, LearnBD obtains a smaller optimality gap. In Table~\ref{tab:mc-table-precision}, we further show the computational time and number of cuts needed by LearnBD to achieve similar optimality gap as BD for the instances that were not solved to optimality either by BD or LearnBD in Table~\ref{tab:mc-table}. 

\begin{table}[htbp]
  \centering
  \caption{Comparing LearnBD with BD for Solving CMND instances to Similar Accuracy}
   \resizebox{\textwidth}{!}{
    \begin{tabular}{cccrccrrrr}
    \toprule
    \multicolumn{1}{p{3.855em}}{Problem} & Instance & \multicolumn{1}{p{5.5em}}{Std\_Training} & $|\Omega|$  & \multicolumn{1}{p{5.215em}}{Std\_Testing} & Method &  Number   & {Opt Gap} & Number  & \multicolumn{1}{p{5.215em}}{Total Time} \\
    Set & & ($\times$ mean) & & ($\times$ mean) & & of Iter. & ($\%$)& of Cuts& of RMPs (s)\\
    \midrule
 \cmidrule{1-5}    \multirow{4}[4]{*}{IV} & \multirow{4}[4]{*}{r041} & \multirow{4}[4]{*}{0.1} & \multirow{4}[4]{*}{80} & \multirow{2}[2]{*}{0.1} & BD    & 269    & 42.19 & 21520  & $>$ 7200 \\
          &       &       &       &       & LearnBD & 204    & 27.41 & 13241  & 4140.49 \\
\cmidrule{5-10}          &       &       &       & \multirow{2}[2]{*}{0.2} & BD    & 275    & 20.44 & 22000  & $>$ 7200 \\
          &       &       &       &       & LearnBD & 158   & 20.39 & 12403 & 2505.91 \\
    \midrule
    \multirow{4}[4]{*}{V} & \multirow{4}[4]{*}{r051} & \multirow{4}[4]{*}{0.1} & \multirow{4}[4]{*}{100} & \multirow{2}[2]{*}{0.1} & BD    & 185   & 9.79 & 18500 & $>$ 7200 \\
          &       &       &       &       & LearnBD & 63    & 7.67  & 6010  & 249.52 \\
\cmidrule{5-10}          &       &       &       & \multirow{2}[2]{*}{0.2} & BD   & 193   & 10.37 & 19300 & $>$ 7200 \\
          &       &       &       &       & LearnBD & 119  & 10.36  & 11764 & 2197.31 \\
\midrule
    \multirow{4}[4]{*}{VI} & \multirow{4}[4]{*}{r061} & \multirow{4}[4]{*}{0.1} & \multirow{4}[4]{*}{100} & \multirow{2}[2]{*}{0.1} & BD    & 121   & 10.54  & 12100 & $>$ 7200 \\
          &       &       &       &       & LearnBD & 71    & 10.45  & 6902  & 1129.37 \\
\cmidrule{5-10}          &       &       &       & \multirow{2}[2]{*}{0.2} & BD    & 137   & 10.28  & 13700 & $>$ 7200 \\
          &       &       &       &       & LearnBD & 70    & 10.26 & 6835  & 1030.30 \\
    \midrule
    \multirow{8}[8]{*}{VII} & \multirow{4}[4]{*}{r071} & \multirow{4}[4]{*}{0.1} & \multirow{4}[4]{*}{80} & \multirow{2}[2]{*}{0.1} & BD    & 159   & 958.69 & 12720 & $>$ 7200 \\
          &       &       &       &       & LearnBD & 143   & 937.63 & 11034 & 4868.40 \\
\cmidrule{5-10}          &       &       &       & \multirow{2}[2]{*}{0.2} & BD    & 154   & 967.37 & 12320 & $>$ 7200 \\
          &       &       &       &       & LearnBD & 139   & 966.31 & 10975 & 4816.33 \\
\cmidrule{2-10}          & \multirow{4}[4]{*}{r075} & \multirow{4}[4]{*}{0.1} & \multirow{4}[4]{*}{80} & \multirow{2}[2]{*}{0.1} & BD    & 121   & 10.89  & 9680 & $>$ 7200 \\
          &       &       &       &       & LearnBD & 115   & 10.86  & 6821 & 2416.36 \\
\cmidrule{5-10}          &       &       &       & \multirow{2}[2]{*}{0.2} & BD    & 119   & 11.23  & 9520 & $>$ 7200 \\
          &       &       &       &       & LearnBD & 103   & 11.19 & 7510  & 3641.27 \\
    \midrule
    \multirow{4}[4]{*}{VIII} & \multirow{4}[4]{*}{r082} & \multirow{4}[4]{*}{0.1} & \multirow{4}[4]{*}{80} & \multirow{2}[2]{*}{0.1} & BD    & 106    & 9.77  & 8480  & $>$ 7200 \\
          &       &       &       &       & LearnBD & 55 & 9.75 & 4233  & 590.09  \\
\cmidrule{5-10}          &       &       &       & \multirow{2}[2]{*}{0.2} & BD    &104    & 9.91  & 8320 & $>$ 7200 \\
          &       &       &       &       & LearnBD & 54   & 8.72  & 4158 & 658.68 \\
    
    \bottomrule
    \end{tabular}%
    }
  \label{tab:mc-table-precision}%
\end{table}%

In Table~\ref{tab:mc-table-precision}, LearnBD uses much shorter time to obtain a similar (slightly tighter) gap as the gap that BD achieves under the two-hour time limit for all instances.
And similarly, LearnBD performs better in testing problems which have similar uncertainty distribution as the training problems.
For instances r051 and r082, LearnBD achieves a similar optimality gap within $10\%$ of the time spent by BD. 
%Though the improvement of optimality gap of LearnBD versus BD in Table~\ref{tab:mc-table} seems not numerically significant, by 
Comparing the optimality gap results and the solving time of LearnBD versus BD in Tables~\ref{tab:mc-table} and~\ref{tab:mc-table-precision}, we notice that BD takes longer to attain solutions with similar optimality gaps as the ones of LearnBD for the very hard instances that cannot be optimized within the 2-hour time limit by either method in Table~\ref{tab:mc-table}. 
Therefore, we conclude that for instances which are hard to solve: (i) when we allow a relatively large optimality gap, for example, $10\%$ for Instance r051, % with $0.1$ standard deviation, 
LearnBD can achieve this gap in significantly shorter time compared to BD; and (ii) when we allow a relatively small gap, both LearnBD and BD need several iterations and thus long solving time. Note that LearnBD only takes $249$ seconds while BD takes more than $7200$ seconds to solve Instance r051 to a less than $10\%$ gap.

\begin{remark}\label{remark:compu retraining times}
Based on our computational results, retraining almost happens in every instance and $\Delta$ may decrease consecutively over iterations before the classifier can identify valuable cuts, i.e., we may consecutively perform retraining.
As mentioned in Remark \ref{remark:train time}, the retraining can be done before the solving process and thus does not affect the total solving time.
The label predicting process of cuts is extremely fast and can be implemented in parallel.
Therefore, the time consumed by retraining, or consecutive retraining, is also negligible when considering the total solving time. 
In our computational tests, LearnBD decreases the values of $\Delta$ relatively more frequently in the first several iterations and then keeps using a fixed $\Delta$ until the termination.
For readers' interest, we present the values of $\Delta$ during iterations of five CMND instances in Table~\ref{tab:delta value}.

\begin{table}[htbp]
  \centering
  \caption{$\Delta$-values Throughout Computational Iterations of Five CMND Instances}
    \begin{tabular}{ccccccccccc}
    \toprule
    \multirow{2}[0]{*}{r041} & Iteration & 1--3   & 3--248 &       &       &       &       &       &       &  \\
    \cmidrule{2-4}
          & $\Delta$ & 1.20   & 1.13  &       &       &       &       &       &       &  \\
    \midrule
    \multirow{2}[0]{*}{r051} & Iteration & 1--2   & 3     & 4--155 &       &       &       &       &       &  \\
      \cmidrule{2-5}     & $\Delta$ & 1.07  & 1.06  & 1.01  &       &       &       &       &       &  \\
    \midrule
    \multirow{2}[0]{*}{r071} & Iteration & 1--5   & 6--170 &       &       &       &       &       &       &  \\
      \cmidrule{2-4}     & $\Delta$ & 1.20   & 1.12  &       &       &       &       &       &       &  \\
    \midrule
    \multirow{2}[0]{*}{r075} & Iteration & 1     & 2     & 3     & 4--5   & 6--7   & 8--15  & 16--150 &       &  \\
     \cmidrule{2-9}      & $\Delta$ & 1.20   & 1.19  & 1.18  & 1.13  & 1.07  & 1.06  & 1.00     &       &  \\
    \midrule
    \multirow{2}[1]{*}{r076} & Iteration & 1     & 2     & 3     & 4     & 5     & 6--7   & 8--10  & 11--15 & 16--52 \\
     \cmidrule{2-11}      & $\Delta$ & 1.20   & 1.11  & 1.06  & 1.05  & 1.02  & 1.01  & 1.00     & 0.99  & 0.98 \\
    \bottomrule
    \end{tabular}%
    
  \label{tab:delta value}%
\end{table}%
\end{remark}

\subsubsection{Results of Replicates for CFLP Instance.} % added in revision 3

In this section, we provide additional computational results for five replicates of each instance in the CFLP problem set IV to show the performance consistency of comparing LearnBD and BD across randomly sampled scenarios. 
%We provide results of CFLP instances where both BD and LearnBD can solve to the given optimality gap within the time limit, i.e., instances from problem set IV.
The demand scenarios of five replicates of each instance are generated independently with the same standard deviation as the training problem. We present the minimum, maximum, mean, and median values of (i) the number of generated cuts and (i) the total solution time of RMPs in Table~\ref{tab:result-rep}.
The observations are consistent with those in Section~\ref{subsec:results cflp} and Section~\ref{subsec:results cmnd} that LearnBD can save more time than BD for optimizing the testing problems.

\begin{table}[htbp]
  \centering
  \caption{Results of Multiple Runs for CFLP Instance from Problem Set IV}
    \begin{tabular}{cccccccccc}
\toprule
\multirow{2}[4]{*}{Instance} & \multirow{2}[4]{*}{Method} & \multicolumn{4}{c}{Number of Cuts} & \multicolumn{4}{c}{Total Time of RMPs (s)} \\
\cmidrule{3-10}          &       & Min   & Max   & Mean  & Median & Min   & Max   & Mean  & Median \\
    \midrule
    \multirow{2}[2]{*}{cap41} & BD    & 3000  & 4000  & 3740  & 3900  & 124.38 & 209.68 & 161.89 & 154.11 \\
    \cmidrule{2-10} 
          & LearnBD & 2499  & 3790  & 3366.2 & 3446  & 81.08 & 173.07 & 106.55 & 96.31 \\
    \midrule
    \multirow{2}[2]{*}{cap42} & BD    & 5800  & 6000  & 5920  & 6000  & 71.70 & 124.95 & 99.95 & 104.79 \\
    \cmidrule{2-10} 
          & LearnBD & 4304  & 5798  & 5123.2 & 5177  & 63.10 & 100.56 & 78.61 & 81.39 \\
    \midrule
    \multirow{2}[2]{*}{cap43} & BD    & 10400 & 10800 & 10640 & 10800 & 147.02 & 263.42 & 211.64 & 221.44 \\
    \cmidrule{2-10} 
          & LearnBD & 10254 & 10486 & 10374 & 10378 & 145.02 & 246.86 & 200.44 & 214.07 \\
    \midrule
    \multirow{2}[2]{*}{cap44} & BD    & 8000  & 9600  & 8720  & 8800  & 171.10 & 249.28 & 213.06 & 217.24 \\
    \cmidrule{2-10} 
          & LearnBD & 8374  & 9938  & 8927.6 & 8774  & 135.97 & 174.92 & 159.06 & 156.52 \\
    \bottomrule
    \end{tabular}%
  \label{tab:result-rep}%
\end{table}%

\subsection{Performance Comparison Over Iterations}\label{subsec:pciteration}

To track the performance of BD and LearnBD over iterations to show their convergence, we depict and analyze the results of two specific instances in Section~\ref{subsubsec:cap41} and Section~\ref{subsubsec:r075}. For each instance, we solve it with both BD and LearnBD separately to the same optimality gap. 
For each approach, we record the following values after each iteration: 
\begin{itemize}
    \item the optimality gap after the current iteration, which helps track the algorithm convergence;
    \item the cumulative time for solving RMPs;%, which is the main bottleneck of the algorithm efficiency;
    \item the total number of cuts added to RMPs in the previous iterations, which reflects the size of RMPs and power of the classifier;
    \item the cumulative time for solving SPs, which can be performed in parallel in each iteration, and therefore this value will not affect the total solving time of the algorithm.
\end{itemize}

\subsubsection{CFLP: Problem  IV, cap41.}\label{subsubsec:cap41}
%E:\Huiwen_Jia\LearningBased\1-Revision\step4-Robust\2020-2-25\cap41-nW16-nF50-nsce100-K2-N400-key1.12-0.99-PLOT\

The results over the iterations are shown in Figure~\ref{fig:plot}.
The instance has 100 scenarios and the termination criterion is reaching $\delta=0.01\%$ optimality gap.
 BD requires $40$ iterations while LearnBD takes $38$ iterations.
LearnBD adds fewer cuts but achieves a similar optimality gap, which indicates the power of our SVM cuts classifier.
Due to the smaller sizes of RMPs, the cumulative time for solving RMPs in LearnBD is significantly shorter than that in BD.

\begin{figure}[htbp]
   \centering
    \begin{subfigure}[b]{0.48\textwidth}
        \includegraphics[width=\textwidth]{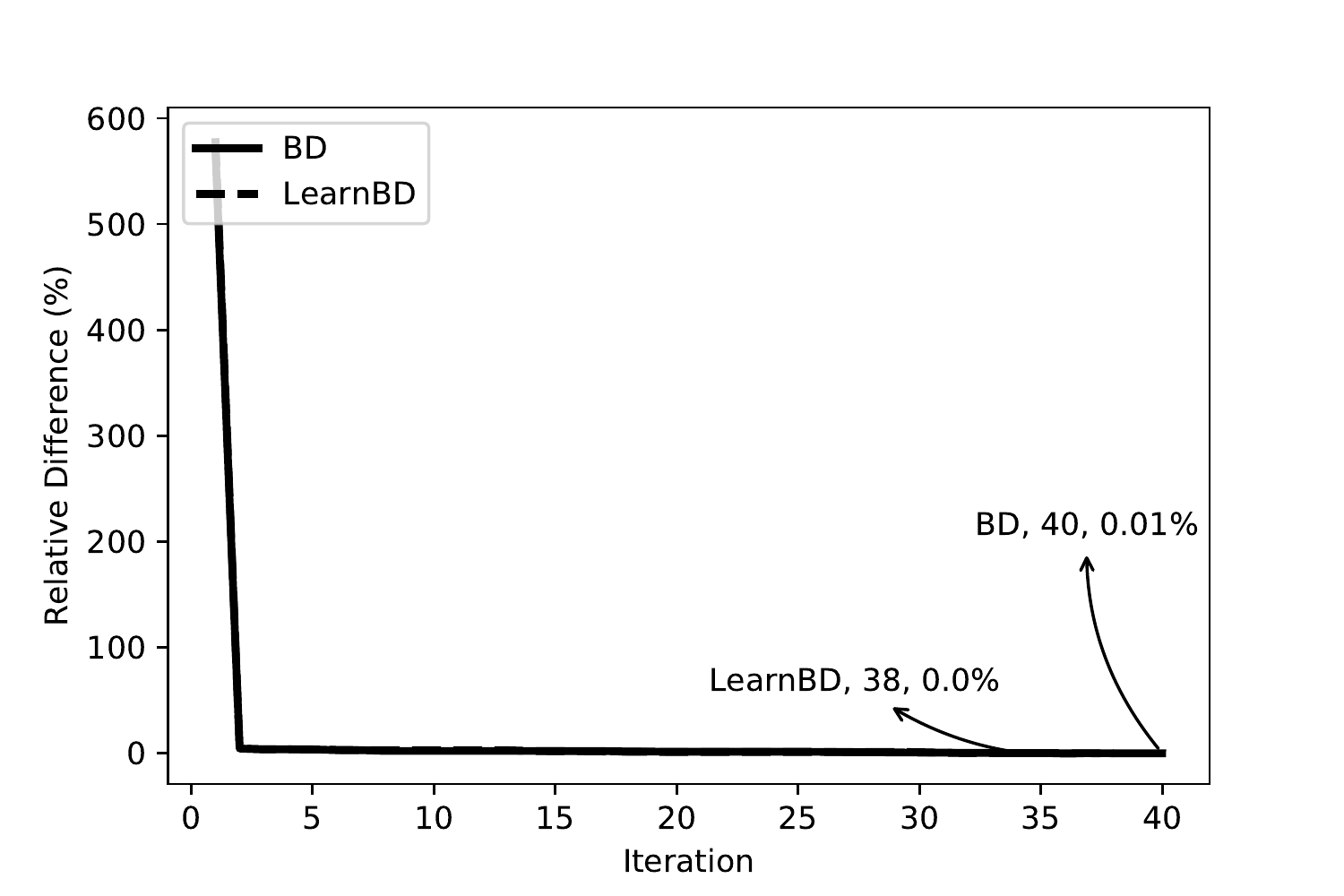}
        \caption{Optimality gap.}
        \label{fig:processgap}
    \end{subfigure}
    ~
    \begin{subfigure}[b]{0.48\textwidth}
        \includegraphics[width=\textwidth]{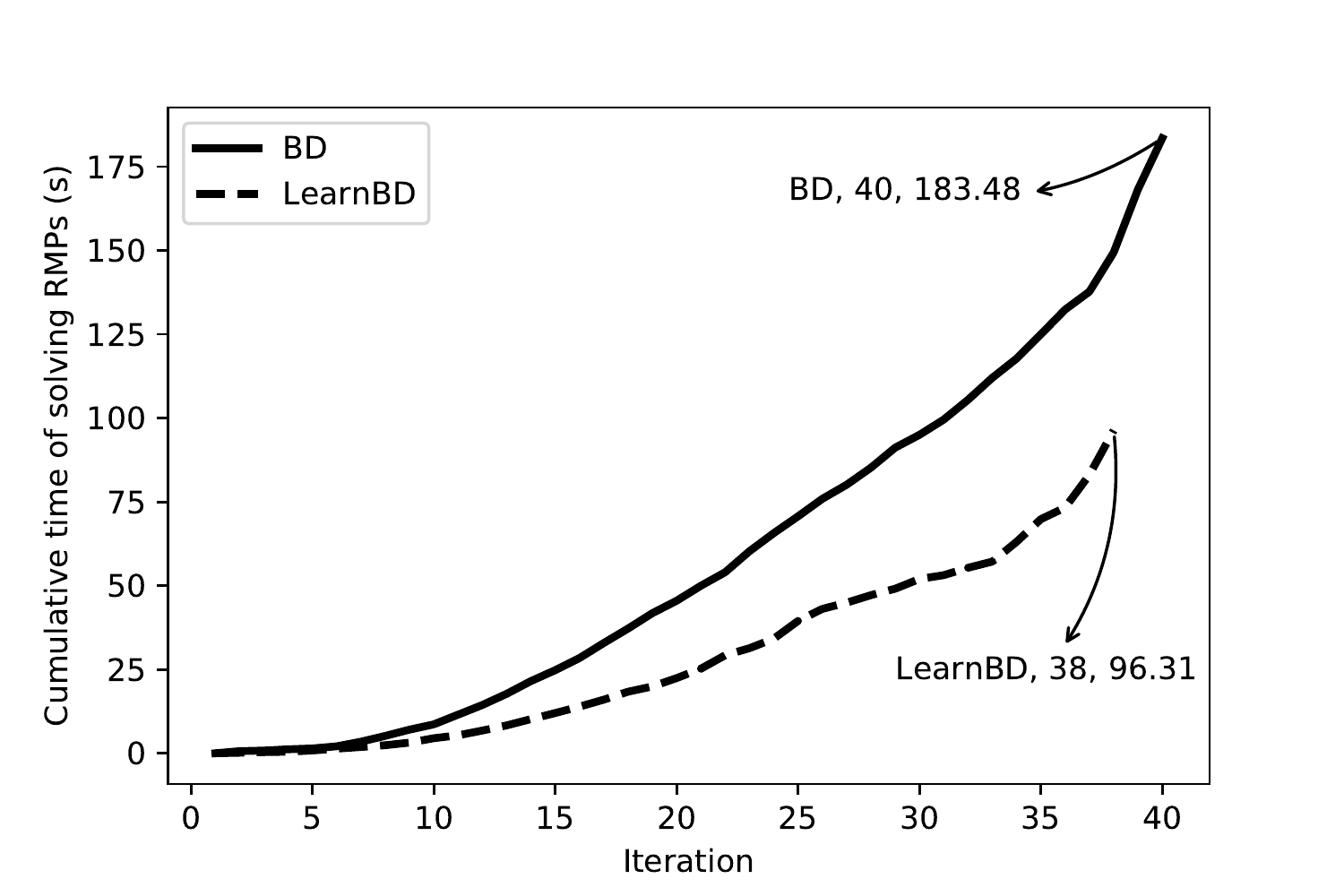}
        \caption{Cumulative time for solving RMPs.}
        \label{fig:processRMPt}
    \end{subfigure}
    
    \begin{subfigure}[b]{0.48\textwidth}
        \includegraphics[width=\textwidth]{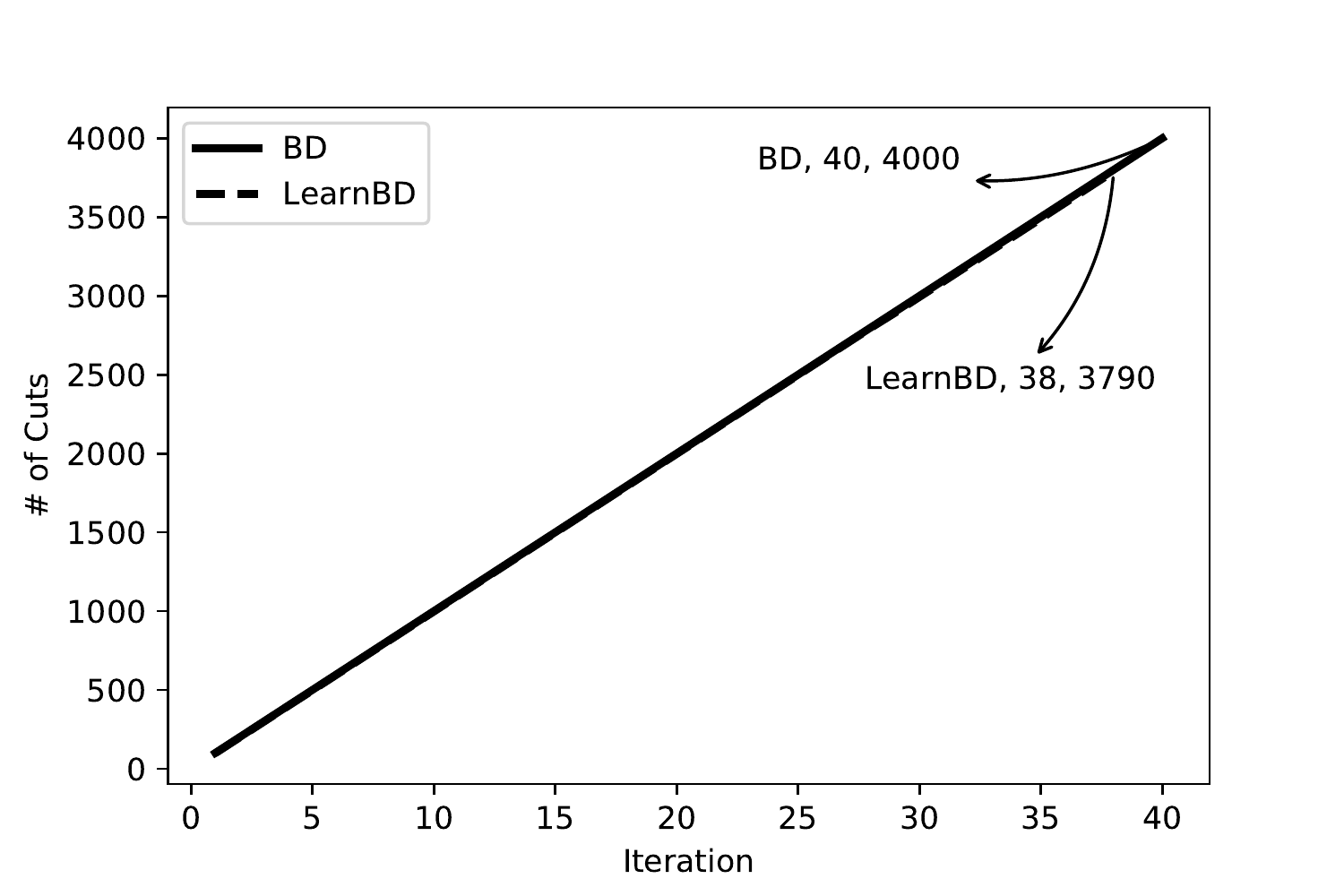}
        \caption{Number of cuts added to RMPs.}
        \label{fig:processncuts}
    \end{subfigure}
    ~
    \begin{subfigure}[b]{0.48\textwidth}
        \includegraphics[width=\textwidth]{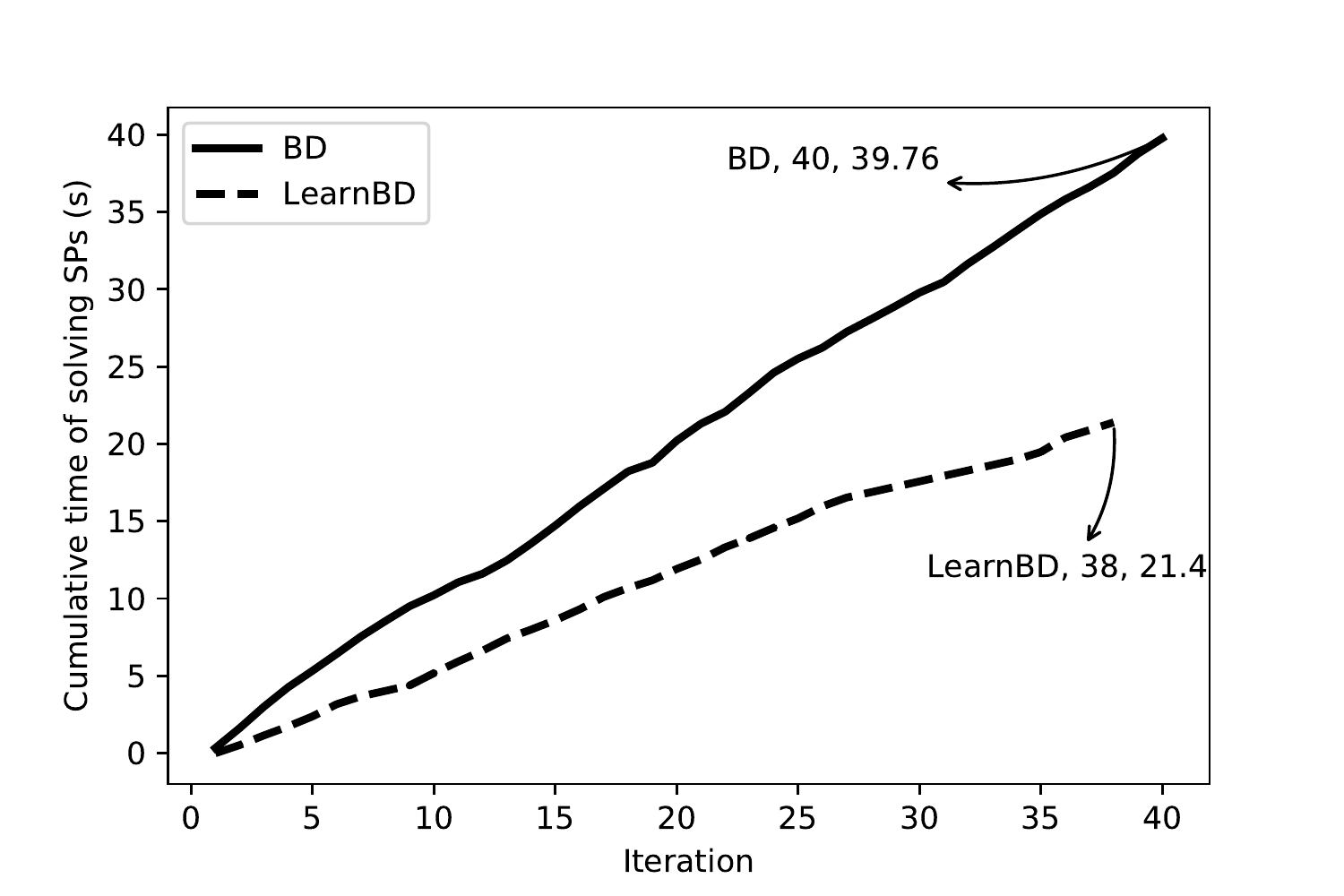}
        \caption{Cumulative time for solving SPs.}
        \label{fig:processSPt}
    \end{subfigure}

    \caption{CFLP: Problem set IV, cap41 instance solved by BD and LearnBD. The horizontal axis is the iteration number.}\label{fig:plot}
\end{figure}

\subsubsection{CMND: Problem Set VII, r075.}\label{subsubsec:r075}

The results over the iterations for both BD and LearnBD are shown in Figure~\ref{fig:plot2}. 
The instance has 80 scenarios and the termination criterion is reaching $\delta=10.89\%$ optimality gap (the result that BD achieves within the two-hour time limit).
We observe similar performance of LearnBD as in the case of solving the CFLP instance. 
In Figure~\ref{fig:processgap2}, in the first 40 iterations, the gap of LearnBD convergences slower than BD because it adds fewer cuts. After 100 iterations, both algorithms achieve similar gaps.
In Figure~\ref{fig:processncuts2}, in the later iterations, the number of added cuts per iteration is quite similar to that added by BD. 
Thus, we can conclude that the first few iterations are important for reducing the total solving time.
\begin{figure}[htbp]
   \centering
    \begin{subfigure}[b]{0.48\textwidth}
        \includegraphics[width=\textwidth]{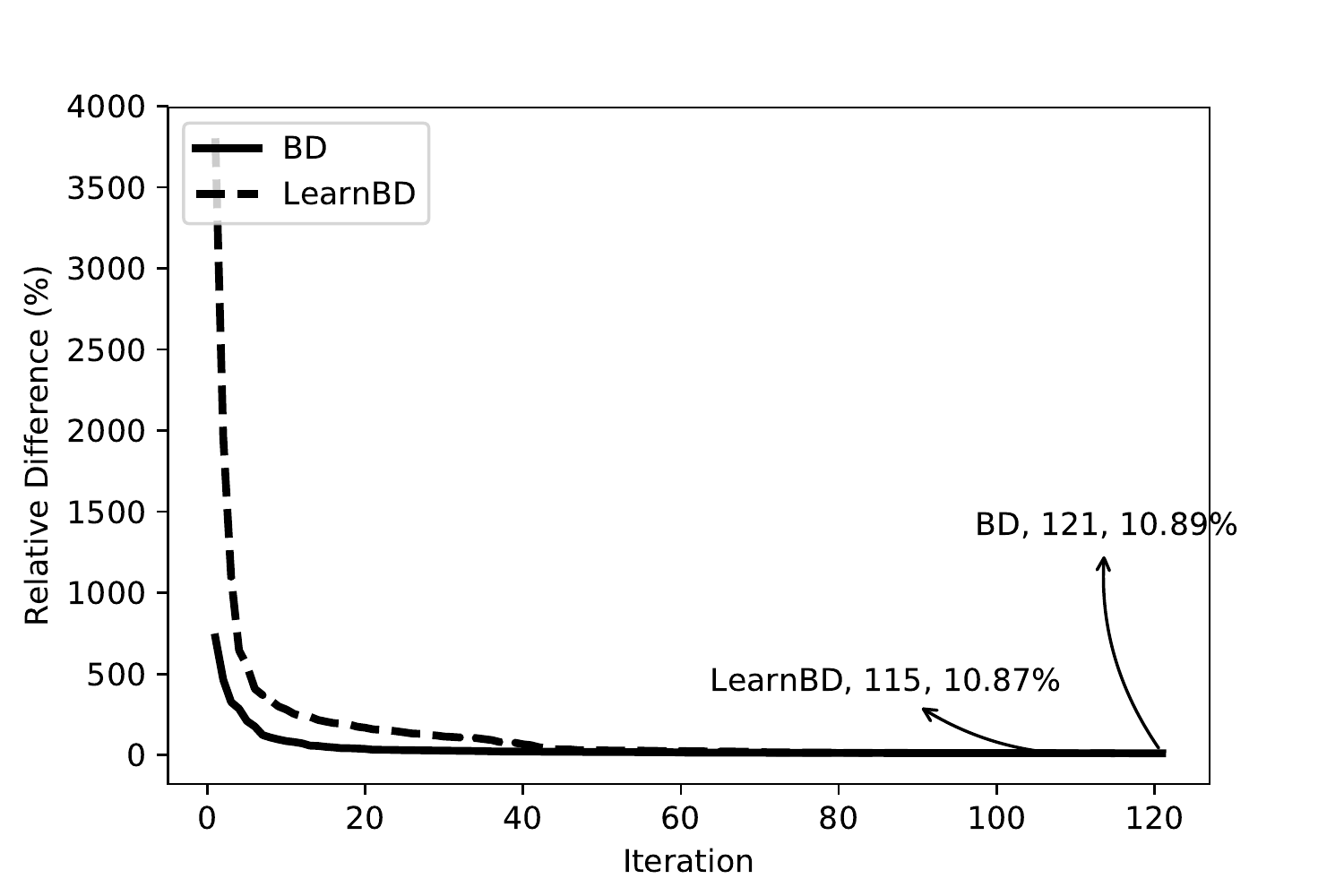}
        \caption{Optimality gap.}
        \label{fig:processgap2}
    \end{subfigure}
    ~
    \begin{subfigure}[b]{0.48\textwidth}
        \includegraphics[width=\textwidth]{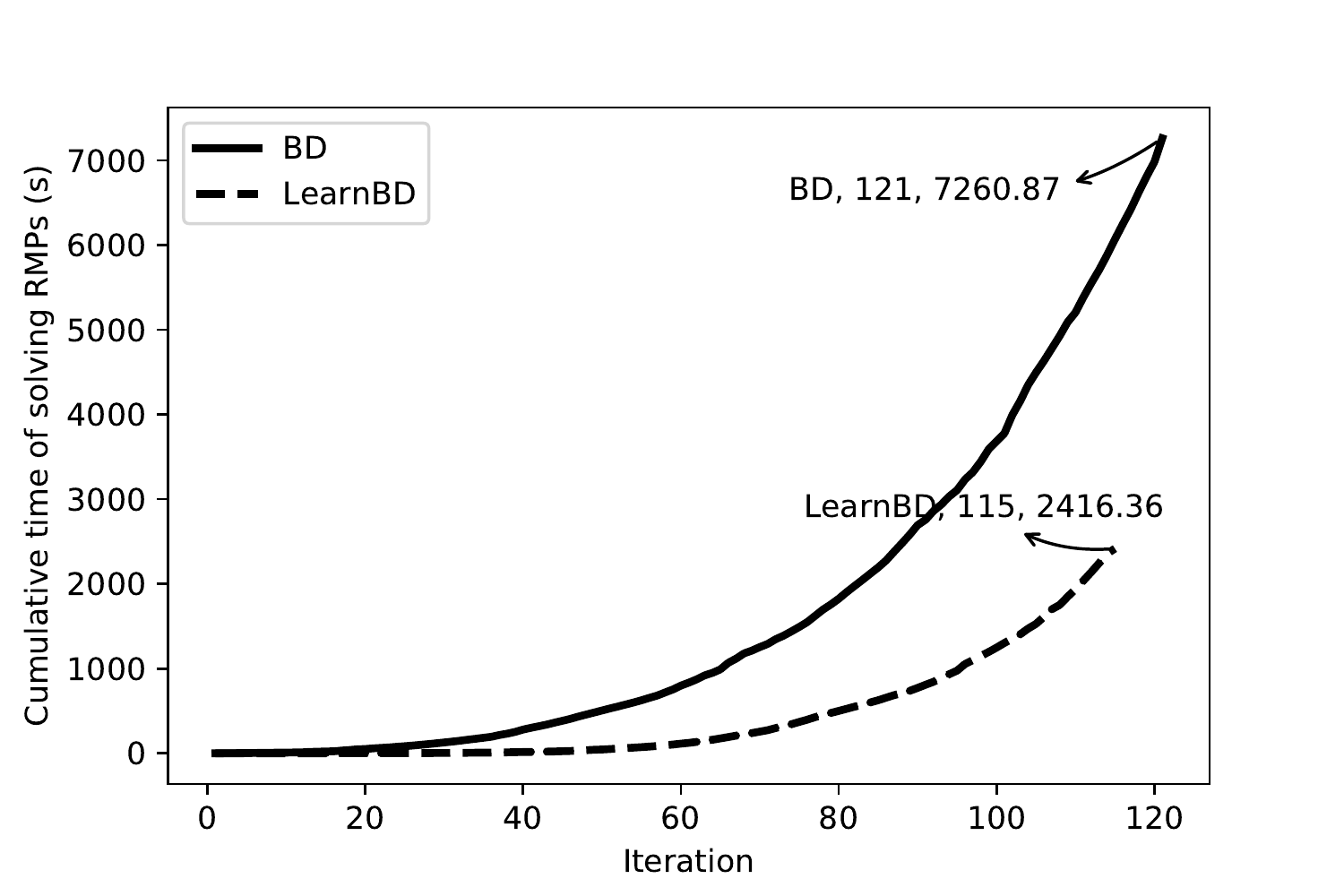}
        \caption{Cumulative time for solving RMPs.}
        \label{fig:processRMPt2}
    \end{subfigure}
    
    \begin{subfigure}[b]{0.48\textwidth}
        \includegraphics[width=\textwidth]{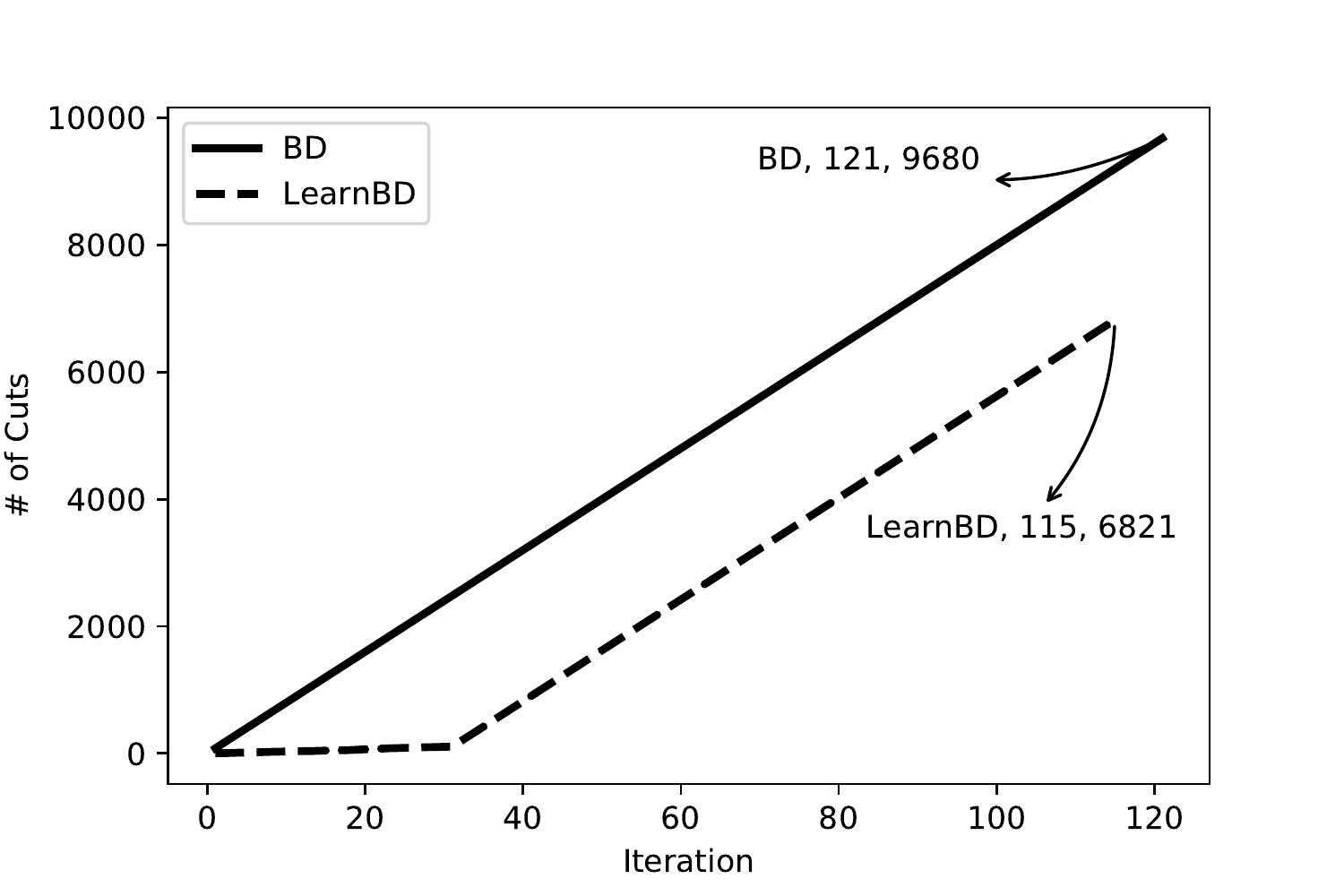}
        \caption{Number of cuts added to RMPs.}
        \label{fig:processncuts2}
    \end{subfigure}
    ~
    \begin{subfigure}[b]{0.48\textwidth}
        \includegraphics[width=\textwidth]{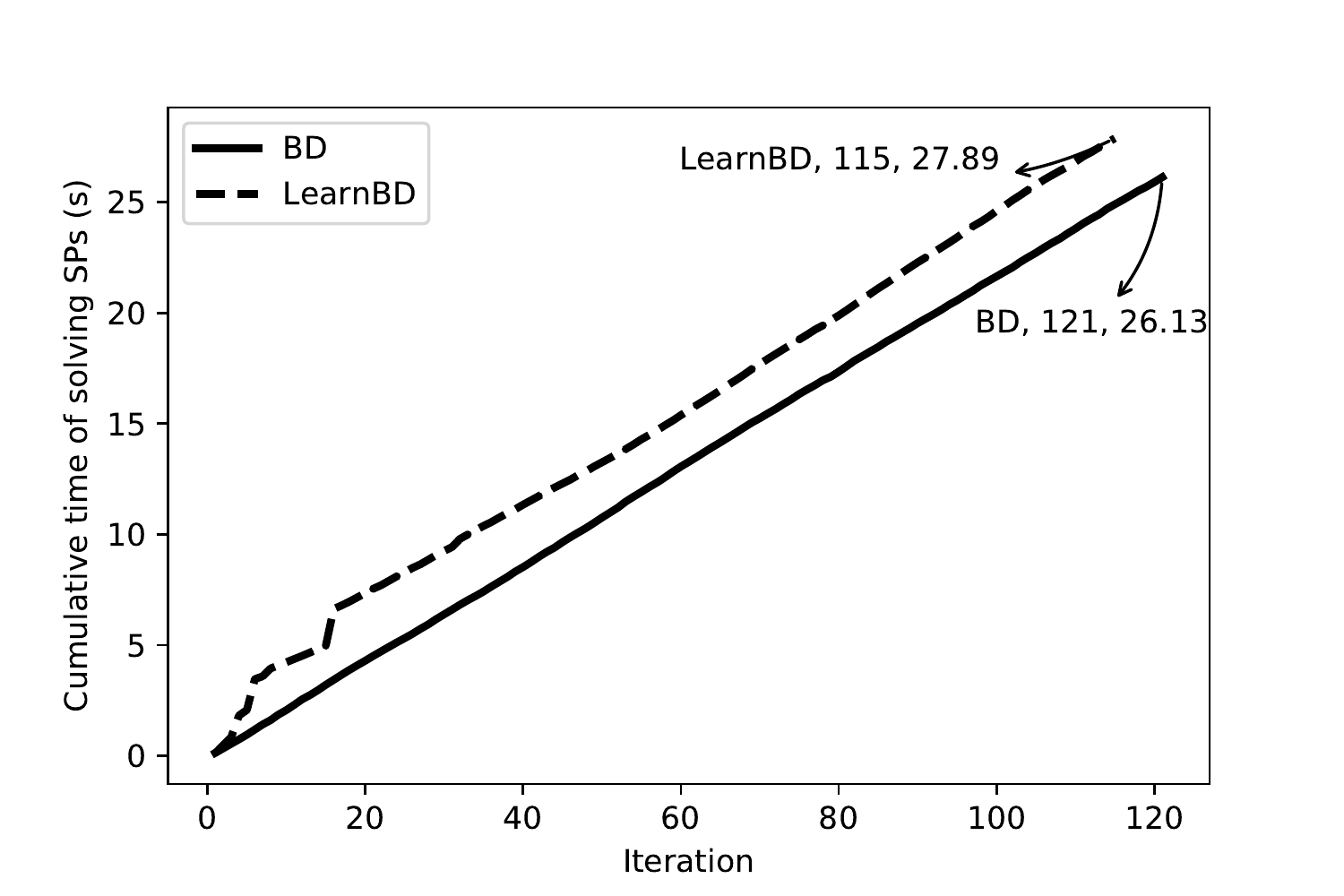}
        \caption{Cumulative time for solving SPs.}
        \label{fig:processSPt2}
    \end{subfigure}

    \caption{CMND: Problem set VII, r075 instance solved by BD and LearnBD. The horizontal axis is the iteration number.}\label{fig:plot2}
\end{figure}

\subsection{Sampling and Training Time}\label{subsec:training time}

As we mentioned in Remark~\ref{remark:phase 1 time} and Remark~\ref{remark:train time}, the cut sampling and classifier training processes can be done before starting solving the testing problems. The outcomes of these two processes can be repeatedly used for solving different testing problems. Therefore, the time of these two processes does not affect the total solving time of LearnBD. Here, we record and present the time of Phase 1 and the time of classifier training in Phase 2 in Table~\ref{tab:training time}.
Column $K$ shows the number of sampling paths and Column $N$ shows the length of the sampling paths, i.e., the number of iterations in each sampling path, in Phase 1. 
We include the following four columns showing the results of solving one testing problem of each instance: Method, Opt Gap, Number of Cuts, and Testing-Time. Note that the testing problems are different from the training problem and for each instance, we only include results of one testing problem with the same standard deviation as the training problem. The last two columns in Table~\ref{tab:training time} are: ``Phase-1-Time'' showing the total time of cut sampling and collecting information from training problems in Phase 1 and ``SVM-Training-Time'' showing the total time used for training and retraining SVM classifiers. 

\begin{table}[htbp]
  \centering
  \caption{Sampling and Training Time}
    \begin{tabular}{ccccrrrrr}
    \toprule
    Instance & $K$     & $N$     & Method & {Opt Gap} & Number & Testing-  & Phase-1- & SVM-Training- \\
     & & & & ($\%$) & of Cuts & Time (s) & Time (s) & Time (s)\\
    \midrule
    \multirow{2}[2]{*}{r041} & \multirow{2}[2]{*}{2} & \multirow{2}[2]{*}{80} & BD    & 42.19 & 21520  & $>$ 7200 & -     & - \\
          &       &       & LearnBD & 22.88 & 19761  &  $>$ 7200 & 37.63 & 0.01 \\
\cmidrule{4-9}    \multirow{2}[2]{*}{r046} & \multirow{2}[2]{*}{2} & \multirow{2}[2]{*}{80} & BD    & 1  & 2560  & 62.30 & -     & - \\
          &       &       & LearnBD & 1  & 1708   & 41.70 & 15.29 & 0.01 \\
\cmidrule{4-9}    \multirow{2}[2]{*}{r051} & \multirow{2}[2]{*}{2} & \multirow{2}[2]{*}{100} & BD    &  9.79 & 18500 &  $>$ 7200  & -     & - \\
          &       &       & LearnBD &3.66  & 15210  &  $>$ 7200 & 22.39 & 0.01 \\
\cmidrule{4-9}    \multirow{2}[2]{*}{r054} & \multirow{2}[2]{*}{2} & \multirow{2}[2]{*}{100} & BD    & 1  & 6400  & 585.94  & -     & - \\
          &       &       & LearnBD & 1  & 5398  & 385.89  & 24.41 & 0.38 \\
\cmidrule{4-9}    \multirow{2}[2]{*}{r061} & \multirow{2}[2]{*}{2} & \multirow{2}[2]{*}{100} & BD    & 10.54  & 12100 &  $>$ 7200 & -     & - \\
          &       &       & LearnBD & 9.01  & 12288  &  $>$ 7200 & 28.59 & 0.40 \\
\cmidrule{4-9}    \multirow{2}[2]{*}{r071} & \multirow{2}[2]{*}{2} & \multirow{2}[2]{*}{80} & BD    & 958.69 & 12720 &  $>$ 7200 & -     & - \\
          &       &       & LearnBD & 884.19 & 13194 &  $>$ 7200 & 186.77 & 0.30 \\
\cmidrule{4-9}    \multirow{2}[2]{*}{r075} & \multirow{2}[2]{*}{2} & \multirow{2}[2]{*}{80} & BD    & 10.89  & 9680 &  $>$ 7200 & -     & - \\
          &       &       & LearnBD & 9.24  & 9621  &  $>$ 7200 & 58.94 & 0.22 \\
\cmidrule{4-9}    \multirow{2}[2]{*}{r076} & \multirow{2}[2]{*}{2} & \multirow{2}[2]{*}{80} & BD    &  1  & 3040  & 360.93 & -     & - \\
          &       &       & LearnBD & 1  & 3026  & 276.91 & 51.93 & 0.21 \\
\cmidrule{4-9}    \multirow{2}[2]{*}{r082} & \multirow{2}[2]{*}{2} & \multirow{2}[2]{*}{80} & BD    & 9.77  & 8480  &  $>$ 7200 & -     & - \\
          &       &       & LearnBD &  9.27  &   8873  &  $>$ 7200 & 45.26 & 0.30 \\
    \bottomrule
    \end{tabular}%
  \label{tab:training time}%
\end{table}%

From Table \ref{tab:training time}, the time of Phase 1 is highly dependent on $N$, while the time of each sampling path is the cumulative time for solving $N$ number of RMPs with $n$ cuts in iteration $n$ for $n=1, \ldots, N$. In our computational studies, we use $N=2\times|\Omega|$, i.e., proportional to $|\Omega|$, and thus the time in Phase 1 is highly related to $|\Omega|$. 
 In most of the instances, the time in Phase 1 and the training time of SVM classifiers are relatively short as compared to the total solving time.
Together with the results in Table~\ref{tab:mc-table-precision}, for Instances r051, r061, and r082, LearnBD  reduces the total solving time by $80\%$ to reach a $10\%$ optimality gap even if LearnBD conducts Phase 1 for solving only one testing problem (if we also consider the time in sampling cuts and training classifiers).

\subsection{Classifier Transfer Between Instances}\label{subsec:transfer}

Previously, we propose to solve a given two-stage stochastic optimization problem with cuts sampled from a similar training problem, where the only difference between the training problem and the original problem is the underlying distribution of the uncertain parameter.
Intuitively, if the problem can be solved with training data collected from another training problem in a different size, e.g., the different number of variables and constraints, then the algorithmic efficiency can be further improved because we can use the same training data, or equivalently speaking, we can transfer the classifiers to solve other problems.

In this section, we present the results where the training problem is different from the original problem as an extension.
We consider two instances cap42 and cap62 of CFLP.
We normalize the two cut characteristics, cut violation and number of cuts generated by the same scenario, to eliminate the incompatible effects of the difference between the training and testing problems.
Instead of using the absolute value of cut violation, we scale the cut violation by $ \frac{\sum_{j \in F}\bar{d}_j \bar{c}}{\bar{k}}$, where $\bar{d}_j$ is the nominal value of the uncertain demand of factory $j$, $\bar{c}=\frac{\sum_{i \in W, j \in F} c_{ij}}{|W|\cdot|F|}$ is the average transportation cost, and $\bar{k}=\sum_{i \in W} k_i/|W|$ is the average warehouse setup cost. 
The scalar $ \frac{\sum_{j \in F}\bar{d}_j \bar{c}}{\bar{k}}$ can be reviewed as the relative total transportation cost, which reflects the magnitude of the optimal objective function value of the CFLP problem.
The second cut characteristic, the number of cuts generated by the same scenario, is related to the required number of iterations if using the traditional Benders.
This characteristic is hard to estimate before solving the problem. Thus, we propose to use the size of the transportation network, i.e., $|W|\cdot|F|$, to scale it.

The results are presented in Table~\ref{tab:trasfer}, for which we set the precision $\delta$ as $0.01\%$ and the time limit as one hour.
In the first two rows, we solve cap42 with traditional BD and the proposed LearnBD where the training data is collected from a training problem with the same model parameters (the results are the same as that of cap42 in Table~\ref{tab:fl-table}).
In the third row, we construct a training problem from cap62 and then use the training data to train an SVM classifier and solve cap42.

\begin{table}[htbp]
  \centering
  \caption{Results with Transferred Classifier}
   \resizebox{\textwidth}{!}{
    \begin{tabular}{cccccccccc}
    \toprule
    Inst. & $|\Omega|$ & \multicolumn{1}{c}{Std.Testing} & Method & \multicolumn{1}{c}{Training} & \multicolumn{1}{c}{Std. Training} & Number  & \multicolumn{1}{c}{Opt Gap} & Number & \multicolumn{1}{c}{Total Time} \\
          &       & \multicolumn{1}{c}{ ($ \times$ mean)} &       & \multicolumn{1}{c}{ Instance} & \multicolumn{1}{c}{ ($ \times$ mean)} & \multicolumn{1}{c}{of Iter.} & \multicolumn{1}{c}{($\%$)} & \multicolumn{1}{c}{of Cuts} & \multicolumn{1}{c}{of RMPS (s)} \\
    \midrule
    \multirow{3}[6]{*}{cap42} & \multirow{3}[6]{*}{100} & \multirow{3}[6]{*}{0.1} & BD    & -     &  -      &  30   & 0.01  & 6000  & 111.96\\
\cmidrule{4-10}          &       &       & LearnBD & cap42 & 0.1  &  32   & 0.01  & 4304  & 84.28 \\
\cmidrule{4-10}          &       &       & LearnBD & cap62 & 0.1   &   29  & 0.01  & 5794  & 97.96  \\
    \bottomrule
    \end{tabular}%
    }
  \label{tab:trasfer}%
\end{table}%

In Table~\ref{tab:trasfer}, LearnBD with a transferred SVM classifier, i.e., the third row, also reduces the cumulative time of RMPs as compared to BD. Via comparing the second and the third rows, LearnBD trained with the same instance adds fewer cuts and takes short time to solve RMPs. Therefore, we can conclude that with a proper scaling rule of the two cut features, the training data can also be re-used for solving other instances to improve the solving efficiency.

\section{Conclusions}\label{sec:conclusion} 
In this paper, we developed a learning-enhanced Benders decomposition algorithm to accelerate the solving process of Benders decomposition, one of the most useful algorithms for solving two-stage stochastic programs.
The bottleneck for traditional Benders decomposition is the increasing sizes and the long solving time of RMPs. We restricted RMP sizes over iterations by distinguishing valuable cuts. 
The computational studies based on capacitated facility location and multi-commodity network design instances demonstrated the power of SVM cut classifier. With a proper selection of hyperparameters, the LearnBD algorithm worked efficiently with smaller sizes and  shorter solving time of RMPs compared to traditional Benders decomposition.

 Our numerical results of diverse instances showed that LearBD can achieve better computational performance than BD for solving different types of benchmark two-stage stochastic programs considered in the literature.
We consider the following future research directions to improve our algorithm.
First, we can extend the characteristics and performance indices for the current LearnBD algorithm to capture multiple types of information of cuts. The second direction is to explore the possibility of constructing an online learning algorithm using reinforcement learning, which requires decomposing the effects of multiple cuts added simultaneously into the same RMP. We are also interested in improving LearnBD for solving a broader range of large-scale problems with special structural properties.

\section*{Acknowledgements}
The authors thank the Associate Editor and two reviewers for their constructive feedback and suggestions. The authors gratefully acknowledge the support from the U.S. Department of Engineering (DoE) grant \# DE-SC0018018.

% Authors must disclose all relationships or interests that 
% could have direct or potential influence or impart bias on 
% the work: 
%
% \section*{Conflict of interest}
%
% The authors declare that they have no conflict of interest.

% BibTeX users please use one of
%\bibliographystyle{spbasic}      % basic style, author-year citations
%\bibliographystyle{spmpsci}      % mathematics and physical sciences
%\bibliographystyle{spphys}       % APS-like style for physics
%\bibliography{}   % name your BibTeX data base

%\bibliographystyle{apa}
%\bibliographystyle{abbrvnat}
%\bibliography{main}

%\clearpage
%\begin{appendix}

\appendix
\renewcommand{\thetheorem}{\thesection.\arabic{theorem}}
\setcounter{section}{0}
\setcounter{theorem}{0}
\renewcommand{\theequation}{\thesection-\arabic{equation}}
\setcounter{equation}{0}
\setcounter{figure}{0}
\setcounter{table}{0}

\section*{APPENDIX}

\section{Preliminaries of SVM}\label{app:svm}

We start with 
\begin{equation}\label{eq:predict3}
  f_{SVM}(\mathbf{o'}) = \textrm{sign} \bigg[ \mathbf{w}^T \phi(\mathbf{o'}) + b \bigg]  
\end{equation}
and $u =  \mathbf{w}^T \phi(\mathbf{o'}) + b$, where $\phi(\cdot)$ is a unique mapping function such that $K(\mathbf{o}_1, \mathbf{o}_2) = \left< \phi(\mathbf{o}_1), \phi(\mathbf{o}_2) \right>$.
By the kernel trick \citep[see, e.g.,][]{scholkopf2002learning}, we do not have to know the exact form of $\phi(\cdot)$ and we can employ the SVM model only with kernel function $K(\cdot,\cdot)$.
In Proposition~\ref{prop:para}, we show this formulation is equivalent to \eqref{eq:predict2}.
With a penalty hyperparameter $C \geq 0$ assigned to the prediction error, the objective function for solving the parameters is defined as  $\frac{1}{2}  \mathbf{w}^T\mathbf{w}  + C \sum_{d = 1}^{\Gamma} \xi_d$, where the first term representing the flatness and $\xi_d, \  d = 1, \ldots, \Gamma$ is an auxiliary variable for representing loss amount of training data $(\mathbf{o}_d, l_d)$.  
The parameters can be solved by an optimization problem:
\begin{subequations}\label{eq:svmoptprob}
\begin{alignat}{4}
(\textrm{SVM-P}) \ \ \  \mathop{ \textrm{min}}\limits_{\mathbf{w},\xi,b} \ & \ \frac{1}{2}  \mathbf{w}^T\mathbf{w}  + C \sum_{d = 1}^{\Gamma} \xi_d \label{eq:svm-pobj}\\
 \textrm{s.t.} \ & \   l_{d} \cdot \big( \mathbf{w}^T \phi(\mathbf{o}_d) +b \big)  \geq 1  - \xi_{d} \ \  \ \ \ \ {d} = 1, \ldots, \Gamma; \label{subeq:firsttype} \\
& \ \xi_{d} \geq 0 \ \ \  \ \ \ \ \ \ \ \ \ \ \ \ {d} = 1, \ldots, \Gamma ,\label{subeq:secondtype}
\end{alignat}
\end{subequations}
where \eqref{subeq:firsttype} are used to calculate the hinge loss and \eqref{subeq:secondtype} are sign restrictions of $\xi$.
(SVM-P) is a convex optimization problem with convex inequality constraints and a quadratic objective function, and thus it is easy to solve by taking Lagrangian Dual and applying Krash-Kuhn-Tucker (KKT) conditions \citep[see, e.g.,][]{Chang2011LIBSVM}.

\begin{proposition}\label{prop:lower}
The optimal objective value of
\begin{equation}\label{eq:lb}
 \mathop{\textrm{min}}\limits_{\mathbf{w},\mathbf{\xi},b} \ \frac{1}{2}  \mathbf{w}^T\mathbf{w}  + C \sum_{d = 1}^{\Gamma} \xi_{d}
- \sum_{{d}=1}^{\Gamma} a_{d} \left\{ l_{d} \cdot \big( \mathbf{w}^T \phi ( \mathbf{o}_{d}) +b \big) - 1  + \xi_{d}  \right\}
- \sum_{{d}=1}^{\Gamma} v_{d} \xi_{d} 
\end{equation}
with any $\mathbf{a},\mathbf{v} \geq 0$ is a valid lower bound of (SVM-P).
\end{proposition}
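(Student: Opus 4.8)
The plan is to recognize that the expression in \eqref{eq:lb} is precisely the Lagrangian of (SVM-P), with $a_d \geq 0$ serving as the multipliers attached to the margin constraints \eqref{subeq:firsttype} and $v_d \geq 0$ as the multipliers attached to the sign restrictions \eqref{subeq:secondtype}. The assertion is then exactly weak Lagrangian duality, so I expect the proof to reduce to a short sign-chasing argument rather than to require any genuine optimization machinery such as strong duality or KKT conditions.

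First I would fix an arbitrary primal-feasible triple $(\mathbf{w},\xi,b)$, i.e., one satisfying both \eqref{subeq:firsttype} and \eqref{subeq:secondtype}. Feasibility says exactly that $l_d \cdot \big(\mathbf{w}^T\phi(\mathbf{o}_d)+b\big) - 1 + \xi_d \geq 0$ and $\xi_d \geq 0$ for every $d = 1,\ldots,\Gamma$. Since the given parameters satisfy $a_d \geq 0$ and $v_d \geq 0$, each product $a_d\{ l_d \cdot (\mathbf{w}^T\phi(\mathbf{o}_d)+b) - 1 + \xi_d \}$ and each product $v_d \xi_d$ is nonnegative, so the two subtracted sums appearing in \eqref{eq:lb} are each nonnegative. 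Evaluating \eqref{eq:lb} at this feasible point and discarding those nonnegative subtracted sums then yields that the value of \eqref{eq:lb} at $(\mathbf{w},\xi,b)$ is at most $\tfrac{1}{2}\mathbf{w}^T\mathbf{w} + C\sum_{d=1}^{\Gamma}\xi_d$, which is exactly the (SVM-P) objective \eqref{eq:svm-pobj} evaluated at the same point.

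Finally, because the minimization in \eqref{eq:lb} ranges over \emph{all} $(\mathbf{w},\xi,b)$ with no constraints, its optimal value is no larger than the value of \eqref{eq:lb} at my particular feasible point, and hence no larger than the (SVM-P) objective there. Taking the infimum of the right-hand side over all primal-feasible triples — noting that the optimal value of \eqref{eq:lb} on the left does not depend on this choice — leaves the optimal value of \eqref{eq:lb} below the infimum of the (SVM-P) objective over its feasible region, i.e., below the optimal value of (SVM-P). This is the claimed lower bound, valid for every $\mathbf{a},\mathbf{v} \geq 0$. I do not anticipate a real obstacle; the only point demanding care is the sign bookkeeping, namely that both constraint families are arranged as ``$\geq 0$'' quantities so that nonnegative multipliers render the subtracted terms nonnegative, which is precisely what orients the inequality in the correct direction.
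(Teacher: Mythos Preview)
Your proposal is correct and follows essentially the same weak-duality argument as the paper: both evaluate the Lagrangian at a primal-feasible point, use nonnegativity of the multipliers together with primal feasibility to show the subtracted terms are nonnegative, and then invoke that the unconstrained minimum in \eqref{eq:lb} is no larger than its value at that point. The only cosmetic difference is that the paper works directly with an optimal primal solution while you work with an arbitrary feasible triple and take the infimum at the end; the logic is identical.
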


%\proof{Proof.} 
\begin{proof}
Assume that $(\mathbf{w}_1,\mathbf{\xi}_1,b_1)$ is an optimal solution to (SVM-P), and therefore it is feasible to the relaxation \eqref{eq:lb}.
Given the constraints in (SVM-P), we have 
$ l_{d} \cdot \big( \mathbf{w}_1^T \phi(\mathbf{o}_{d})+b_1 \big) - 1  + \xi_{1d} \geq 0$ and $ \xi_{1d} \geq 0$ for all $d$. Therefore, for any $\mathbf{a},\mathbf{v} \geq 0$, the objective value of \eqref{eq:lb} based on solution $(\mathbf{w}_1,\mathbf{\xi}_1,b_1)$ is no larger than $\frac{1}{2}\mathbf{w}_1^T\mathbf{w}_1  + C \sum_{d = 1}^{\Gamma} \xi_{1d}$. Moreover, as the optimal objective value of \eqref{eq:lb} is smaller than or equal to the objective value of any feasible solution, we can conclude that the objective value of \eqref{eq:lb} evaluated at the feasible solution $(\mathbf{w}_1,\mathbf{\xi}_1,b_1)$ is always smaller than or equal to $\frac{1}{2}\mathbf{w}_1^T\mathbf{w}_1  + C \sum_{d = 1}^{\Gamma} \xi_{1d}$, which is the optimal objective value of (SVM-P) and thus provides a valid lower bound of (SVM-P). This completes the proof. 
%\hfill \Halmos
%\endproof
\end{proof}

By associating dual variables $\mathbf{a} \geq 0$ with inequality constraints \eqref{subeq:firsttype} and dual variables $\mathbf{v} \geq 0$ with inequality constraints \eqref{subeq:secondtype}, we can relax those two sets of constraints and then obtain the corresponding Lagrangian function for any feasible solution $(\mathbf{w},\mathbf{\xi},b)$ as 
$$
 L(\mathbf{w},\mathbf{\xi},b;\mathbf{a},\mathbf{v}) = \\
\frac{1}{2}  \mathbf{w}^T\mathbf{w}  + C \sum_{d = 1}^{\Gamma} \xi_{d}
- \sum_{{d}=1}^{\Gamma} a_{d} \left\{ l_{d} \cdot \big( \mathbf{w}^T \phi (\mathbf{o}_{d})+b \big) - 1  + \xi_{d}  \right\}
- \sum_{{d}=1}^{\Gamma} v_{d} \xi_{d} .
$$
By weak duality, the Lagrangian problem
$$
\mathop{\textrm{min}}\limits_{\mathbf{w},\mathbf{\xi},b} L(\mathbf{w},\mathbf{\xi},b;\mathbf{a},\mathbf{v})
$$ yields a valid lower bound of (SVM-P). Moreover, $$
\mathop{\textrm{max}}\limits_{\mathbf{a \geq 0},\mathbf{v \geq 0}} \ \mathop{\textrm{min}}\limits_{\mathbf{w},\mathbf{\xi},b} L(\mathbf{w},\mathbf{\xi},b;\mathbf{a},\mathbf{v})
$$ is the dual problem that seeks the best lower bound.

\begin{definition}
\textit{Krash-Kuhn-Tucker (KKT)} is a set of conditions including: Primal feasibility, dual feasibility, complementary slackness, and the first derivative of Lagrangian function $L(\cdot)$ being zero. If the primal problem is 
$$
\begin{aligned}
\min_x \ & \ f_0(x) \\
\text{subject to} \ & \ f_i(x) \leq 0 \ \ \forall i \in I \\
 \ & \ h_i(x) = 0 \ \ \forall i \in I'
\end{aligned}
$$ and the associated dual multipliers are $\lambda \geq 0$ and $\mu$, then the KKT conditions are:
\begin{itemize}
    \item $f_i(x^*) \leq 0 \ \ \forall i \in I$ and $h_i(x^*) = 0 \ \ \forall i \in I'$ (primal feasibility),
    \item $\lambda^* \geq 0$ (dual feasibility),
    \item $f_i(x^*) \lambda_i^* = 0$ (complementary slackness),
    \item $\nabla f_0(x^*) +  \sum_{i \in I} \lambda_i^* \nabla f_i(x^*) +  \sum_{i \in I'} \mu_i^* \nabla h_i(x^*) = 0$ (first derivative of $L(\cdot)$ is zero).
\end{itemize}
\end{definition}

\begin{proposition}\label{prop:sd&kkt}
The strong duality holds for (SVM-P) and KKT conditions are satisfied at the optimal primal and dual solution pair.
\end{proposition}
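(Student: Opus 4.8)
The plan is to recognize (SVM-P) in \eqref{eq:svmoptprob} as a convex quadratic program with affine constraints and then apply a standard convex-duality argument. First I would verify convexity: the objective $\frac{1}{2}\mathbf{w}^T\mathbf{w}+C\sum_{d}\xi_d$ is the sum of a convex quadratic in $\mathbf{w}$ and a linear term in $\xi$ (recall $C\geq 0$), hence convex, while both constraint families \eqref{subeq:firsttype} and \eqref{subeq:secondtype} are affine in the decision variables $(\mathbf{w},\xi,b)$. Thus (SVM-P) is a convex program whose feasible region is a polyhedron, and Proposition~\ref{prop:lower} already supplies the weak-duality side, namely that every Lagrangian relaxation yields a valid lower bound.

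Next I would establish a constraint qualification by exhibiting a strictly feasible (Slater) point. Taking $\mathbf{w}=\mathbf{0}$, $b=0$, and $\xi_d=2$ for every $d$ gives $l_d\big(\mathbf{w}^T\phi(\mathbf{o}_d)+b\big)=0>-1=1-\xi_d$ and $\xi_d=2>0$, so both inequality families hold strictly. Slater's condition therefore applies, and together with the convexity established above it guarantees strong duality: the optimal value of the Lagrangian dual $\max_{\mathbf{a},\mathbf{v}\geq 0}\ \min_{\mathbf{w},\xi,b} L(\mathbf{w},\xi,b;\mathbf{a},\mathbf{v})$ equals the optimal value of (SVM-P). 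This upgrades the lower bound of Proposition~\ref{prop:lower} to an equality.

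To pin down attainment and obtain KKT, I would note that the objective is bounded below by $0$ and the feasible set is a nonempty polyhedron, so by the Frank--Wolfe theorem for quadratic programs the primal infimum is attained; strong duality then forces the dual supremum to be attained as well, at a pair $\{(\mathbf{w}^*,\xi^*,b^*),(\mathbf{a}^*,\mathbf{v}^*)\}$ whose objective values coincide. Since the problem is convex and differentiable and satisfies Slater's condition, the KKT conditions are both necessary and sufficient for optimality; hence they hold at this optimal primal--dual pair, establishing the second assertion.

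The main obstacle I anticipate is not any single computation but choosing the cleanest constraint qualification and justifying attainment over an unbounded feasible set, since $b$ is free and the $\xi_d$ are bounded only from below. The Slater point must be checked to satisfy \emph{both} inequality families strictly, and the attainment of the primal minimum should be argued carefully---either via the Frank--Wolfe theorem or by observing that the intersection of the feasible polyhedron with a sublevel set of the objective is compact in the coordinates that actually appear in the objective---so that KKT applies at a genuine optimizer rather than merely at a limit point.
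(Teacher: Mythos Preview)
Your proposal is correct and follows essentially the same approach as the paper: verify that (SVM-P) is a convex program with affine constraints, invoke Slater's condition to obtain strong duality, and then appeal to differentiability and convexity so that the KKT conditions hold at an optimal primal--dual pair. The paper's own argument is considerably terser (it does not exhibit a Slater point or discuss attainment), so your added care with the explicit Slater point $(\mathbf{w},b,\xi)=(\mathbf{0},0,2\cdot\mathbf{1})$ and the Frank--Wolfe argument for attainment is a welcome refinement rather than a departure.
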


%\proof{Proof.} 
\begin{proof}
(SVM-P) has a quadratic objective and affine inequality constraints, and therefore by Slater's condition strong duality holds. Because (SVM-P) is  differentiable, KKT conditions hold at the global optimum. This completes the proof. 
\end{proof}
%\hfill \Halmos
%\endproof

\begin{theorem}
The optimal objective value of the optimization problem
\begin{equation}\label{eq:ldp}
\mathop{\textrm{max}}\limits_{\mathbf{a \geq 0},\mathbf{v \geq 0}} \ \mathop{\textrm{min}}\limits_{\mathbf{w},\mathbf{\xi},b} \ \frac{1}{2}  \mathbf{w}^T\mathbf{w}  + C \sum_{d = 1}^{\Gamma} \xi_{d}
- \sum_{{d}=1}^{\Gamma} a_{d} \left\{ l_{d} \cdot \big( \mathbf{w}^T \phi(\mathbf{o}_{d})+b \big) - 1  + \xi_{d}  \right\}
- \sum_{{d}=1}^{\Gamma} v_{d} \xi_{d} 
\end{equation}
equals to the optimal objective value of (SVM-P).
\end{theorem}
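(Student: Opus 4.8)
The plan is to recognize the stated max--min problem as precisely the Lagrangian dual of (SVM-P) and then to assemble the two preceding propositions into a zero--duality--gap argument. Writing $g(\mathbf{a},\mathbf{v}) = \min_{\mathbf{w},\xi,b} L(\mathbf{w},\xi,b;\mathbf{a},\mathbf{v})$ for the inner minimization, the quantity in \eqref{eq:ldp} is exactly $\max_{\mathbf{a}\geq 0,\,\mathbf{v}\geq 0} g(\mathbf{a},\mathbf{v})$. Thus the theorem is the assertion that the optimal value of the dual equals the optimal value of the primal (SVM-P), i.e.\ that the duality gap is zero.

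First I would establish the ``$\leq$'' direction by weak duality. Proposition~\ref{prop:lower} already shows that for every fixed pair $\mathbf{a},\mathbf{v}\geq 0$ the value $g(\mathbf{a},\mathbf{v})$ is a valid lower bound for the optimal value of (SVM-P). Since this holds uniformly over all feasible multipliers, taking the maximum over $\mathbf{a}\geq 0,\ \mathbf{v}\geq 0$ preserves the inequality, giving
\begin{equation*}
\max_{\mathbf{a}\geq 0,\ \mathbf{v}\geq 0}\ g(\mathbf{a},\mathbf{v}) \ \leq\ \text{optimal value of (SVM-P)}.
\end{equation*}

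Next I would close the gap via strong duality. The feasible region of (SVM-P) is cut out by the affine inequalities \eqref{subeq:firsttype}--\eqref{subeq:secondtype}, and a strictly feasible point exists (for any $\mathbf{w},b$ one may choose each $\xi_d$ large enough that both constraints hold strictly), so Slater's condition is met for this convex quadratic program. This is exactly the content of the preceding proposition, which records that strong duality holds and that the KKT conditions are satisfied at an optimal primal--dual pair. Strong duality states that the dual optimum equals the primal optimum, and combined with the weak--duality inequality above this forces equality in \eqref{eq:ldp}, completing the argument.

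I do not anticipate a genuine obstacle, since the two propositions carry the analytic weight; the only point demanding care is confirming that the inner minimization is attained and finite at the dual optimizer rather than diverging to $-\infty$. This is handled by the stationarity part of the KKT conditions: setting the gradient of $L$ with respect to $\mathbf{w}$, $b$, and $\xi$ to zero yields $\mathbf{w}=\sum_{d} a_d l_d \phi(\mathbf{o}_d)$, $\sum_{d} a_d l_d = 0$, and $a_d + v_d = C$, so that on the relevant finite face of the multiplier set $g$ reduces to the familiar quadratic SVM dual. Substituting these relations back confirms that the two optimal values coincide, which is the desired conclusion.
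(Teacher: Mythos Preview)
Your proposal is correct and follows essentially the same approach as the paper: identify \eqref{eq:ldp} as the Lagrangian dual of (SVM-P) and invoke Proposition~\ref{prop:sd&kkt} (strong duality via Slater's condition) to conclude equality. The paper's proof is terser---it cites Proposition~\ref{prop:sd&kkt} in a single line without separately spelling out weak duality or the stationarity conditions---but your additional remarks on Proposition~\ref{prop:lower} and the KKT relations are consistent elaborations, not a different route.
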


%\proof{Proof.} 
\begin{proof}
Recall the Lagrangian dual problem 
$$
\mathop{\textrm{max}}\limits_{\mathbf{a \geq 0},\mathbf{v \geq 0}} \ \mathop{\textrm{min}}\limits_{\mathbf{w},\mathbf{\xi},b} \ L(\mathbf{w},\mathbf{\xi},b;\mathbf{a},\mathbf{v}) . 
$$ 
By Proposition \ref{prop:sd&kkt}, strong duality holds and thus the optimal objective value of the dual problem and primal problem are equal.
%\hfill \Halmos
%\endproof
\end{proof}

\begin{proposition}\label{prop:deri}
The Lagrangian dual function \eqref{eq:lb} in Proposition \ref{prop:lower} can be reformulated as
\begin{subequations}\label{eq:refomlb}
\begin{alignat}{2}
& \ \frac{1}{2} \sum_{{d}=1}^{\Gamma}\sum_{d'=1}^{\Gamma} l_{d} l_{d'} a_{d} a_{d'} K(\mathbf{o}_d,\mathbf{o}_{d'} ) + \sum_{{d}=1}^{\Gamma}a_{d} - \sum_{{d}=1}^{\Gamma} v_{d} \xi_{d}  \\
\text{with} \ & \  \sum_{{d}=1}^{\Gamma} a_{d} l_{d} = 0; \\
& \  C - a_{d} - v_{d} = 0 \ \  \ {d} = 1, \ldots, \Gamma .\ \ \ 
\end{alignat}
\end{subequations}
\end{proposition}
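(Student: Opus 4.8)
The plan is to evaluate the inner minimization in \eqref{eq:lb} in closed form by exploiting its convexity in the primal variables. The objective is strictly convex and quadratic in $\mathbf{w}$, but affine in $b$ and in each $\xi_d$; consequently its infimum over $(\mathbf{w},\xi,b)$ is characterized entirely by first-order stationarity, and I would obtain the reformulation by computing the three gradient conditions and substituting them back. Differentiating the Lagrangian with respect to $\mathbf{w}$ and setting the gradient to zero gives $\mathbf{w}=\sum_{d=1}^{\Gamma} a_d l_d \phi(\mathbf{o}_d)$; differentiating with respect to $b$ gives $\sum_{d=1}^{\Gamma} a_d l_d = 0$; and differentiating with respect to each $\xi_d$ gives $C - a_d - v_d = 0$. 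The latter two are exactly the side conditions displayed in \eqref{eq:refomlb}.

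A point worth stressing is why these two equalities appear as constraints rather than as substitutions. Because the objective is affine in $b$ and in each $\xi_d$, the inner infimum is $-\infty$ unless the coefficient multiplying each of these variables vanishes; the dual function is therefore finite only on the set carved out by $\sum_d a_d l_d = 0$ and $C - a_d - v_d = 0$. On that set the $b$-dependent term $-b\sum_d a_d l_d$ is identically zero, and the aggregate coefficient $C - a_d - v_d$ on each $\xi_d$ is zero, so the only substitution that remains to be performed is that of the closed-form $\mathbf{w}$.

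Next I would substitute $\mathbf{w}=\sum_d a_d l_d\phi(\mathbf{o}_d)$ into the two terms that involve $\mathbf{w}$: the quadratic $\tfrac{1}{2}\mathbf{w}^T\mathbf{w}$ and the coupling term $-\sum_d a_d l_d\,\mathbf{w}^T\phi(\mathbf{o}_d)$. Both expand into double sums of inner products $\langle\phi(\mathbf{o}_d),\phi(\mathbf{o}_{d'})\rangle$, at which point I would invoke the kernel identity $K(\mathbf{o}_d,\mathbf{o}_{d'})=\langle\phi(\mathbf{o}_d),\phi(\mathbf{o}_{d'})\rangle$ recorded in Appendix~\ref{app:svm} to replace every inner product by a kernel evaluation, thereby eliminating the unknown feature map $\phi$. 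Collecting the resulting kernel double sum together with the constant $\sum_d a_d$ (which originates from the $-1$ inside the braces of \eqref{eq:lb}) yields the objective in \eqref{eq:refomlb}.

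The main obstacle is careful bookkeeping rather than any deep difficulty: the quadratic-in-$\mathbf{w}$ contribution and the bilinear coupling term are of the same order once $\mathbf{w}$ is substituted, and one must combine them attentively to land on the stated coefficient of the kernel double sum, while simultaneously verifying that every $b$- and $\xi$-dependent piece has been either absorbed into or annihilated by the two stationarity constraints. The appeal to the kernel trick to suppress $\phi$ is then routine.
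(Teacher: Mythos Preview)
Your proposal is correct and follows essentially the same route as the paper's proof: compute the three first-order stationarity conditions $\partial L/\partial \mathbf{w}=0$, $\partial L/\partial b=0$, $\partial L/\partial \xi_d=0$, then substitute the resulting expression for $\mathbf{w}$ back into the Lagrangian using the kernel identity. Your explanation of \emph{why} the $b$- and $\xi$-stationarity conditions must appear as side constraints (namely, that the inner infimum is $-\infty$ unless the affine coefficients vanish) is in fact more careful than the paper's, which simply sets derivatives to zero and plugs in.
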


%\proof{Proof.}
\begin{proof}
The Lagrangian dual function \eqref{eq:lb} is differentiable, and therefore the derivatives associated with $(\mathbf{w},\mathbf{\xi},b)$ at the minimum are equal to zero, i.e.,
\begin{subequations}\label{eq:partial}
\begin{alignat}{2}
\frac{\partial L}{\mathbf{w}  } & = 0 \ \rightarrow \ \mathbf{w} = \sum_{{d}=1}^{\Gamma} a_{d} l_{d} \phi_d \label{eq:avalue}\\
\frac{\partial L}{b } & = 0 \ \rightarrow \ \sum_{{d}=1}^{\Gamma} a_{d} l_{d} = 0\\
\frac{\partial L}{ \xi } & = 0 \ \rightarrow \ C - a_{d} - v_{d} = 0, \ {d} = 1, \ldots, \Gamma \ \rightarrow \ a_{d} \leq c, \ {d} = 1, \ldots, \Gamma.
\end{alignat}
\end{subequations}
Plugging in the results in \eqref{eq:partial}, we can obtain the reformulation of \eqref{eq:lb} in \eqref{eq:refomlb}. This completes our proof. 
\end{proof}
%\hfill \Halmos
%\endproof

\begin{theorem}
The Lagrangian dual problem \eqref{eq:ldp} is equivalent to solving a convex quadratic program:
\begin{subequations}\label{svm2}
\begin{alignat}{2}
 \mathop{\textrm{max}}\limits_{\mathbf{a}} \ & \ \frac{1}{2} \sum_{{d}=1}^{\Gamma}\sum_{d'=1}^{\Gamma} l_{d} l_{d'} a_{d} a_{d'}K(\mathbf{o}_d,\mathbf{o}_{d'} ) + \sum_{{d}=1}^{\Gamma}a_{d} \\
 \textrm{s.t.} \ & \  \sum_{{d}=1}^{\Gamma} a_{d} l_{d} = 0; \\
& \ 0 \leq a_{d} \leq C \ \  \ {d} = 1, \ldots, \Gamma . \ \ \
\end{alignat}
\end{subequations}
\end{theorem}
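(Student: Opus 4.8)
The plan is to derive \eqref{svm2} from the Lagrangian dual problem \eqref{eq:ldp} by carrying out the inner minimization in closed form and then eliminating the dual variables $\mathbf{v}$, so that only $\mathbf{a}$ remains. The argument rests almost entirely on Proposition~\ref{prop:deri}, which already reformulates the inner Lagrangian \eqref{eq:lb} once the first-order conditions \eqref{eq:partial} are substituted; the remaining task is to show how those first-order relations pass from \emph{optimality conditions of the inner problem} to \emph{hard constraints of the reduced outer problem}.

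First I would fix an arbitrary $(\mathbf{a},\mathbf{v})\ge 0$ and analyze $\min_{\mathbf{w},\mathbf{\xi},b} L$. The inner objective is convex and differentiable, so its minimum is attained where the gradients vanish, which are exactly the stationarity relations in \eqref{eq:partial}. The key structural observation is that $b$ and $\mathbf{\xi}$ enter $L$ \emph{linearly}: the coefficient of $b$ is $-\sum_d a_d l_d$ and the coefficient of each $\xi_d$ is $C-a_d-v_d$. Consequently, unless $\sum_d a_d l_d = 0$ and $C-a_d-v_d = 0$ hold for every $d$, the inner minimum equals $-\infty$. Since the outer problem \emph{maximizes} over $(\mathbf{a},\mathbf{v})$, these degenerate cases are never optimal, so I may impose the two stationarity equations as explicit constraints of the outer problem without changing its optimal value.

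Next I would substitute these relations back in. Because $C-a_d-v_d=0$ makes the coefficient of every $\xi_d$ vanish, all $\xi$-terms (including $-\sum_d v_d\xi_d$) cancel, and $\sum_d a_d l_d = 0$ kills the term in $b$; substituting $\mathbf{w}=\sum_d a_d l_d \phi_d$ from \eqref{eq:avalue} collapses $\tfrac12\mathbf{w}^T\mathbf{w}$ and the cross term into the kernel double sum. This leaves precisely the quadratic objective plus the linear term $\sum_d a_d$ appearing in \eqref{eq:refomlb}, which is the statement of Proposition~\ref{prop:deri}. Then I would eliminate $\mathbf{v}$: solving $C-a_d-v_d=0$ gives $v_d = C-a_d$, and the sign restriction $v_d\ge 0$ becomes $a_d\le C$; together with the original $a_d\ge 0$ this is the box constraint $0\le a_d\le C$. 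As $\mathbf{v}$ no longer appears in the objective, the maximization reduces to a problem in $\mathbf{a}$ alone, namely \eqref{svm2}. That this is a genuine convex program follows because a Lagrangian dual function is always concave, consistent with the strong duality already established in Proposition~\ref{prop:sd&kkt}; equivalently, the kernel matrix $[K(\mathbf{o}_d,\mathbf{o}_{d'})]$ is a Gram matrix of the $\phi_d$ and hence positive semidefinite, while the feasible region is polyhedral.

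The step I expect to demand the most care is the reduction of the inner minimization. One must argue rigorously that the linear dependence of $L$ on $\mathbf{\xi}$ and $b$ \emph{forces} the two stationarity equalities (through the $-\infty$ argument), rather than merely listing them as necessary first-order conditions; it is exactly this unboundedness argument that legitimately converts the equalities into constraints of \eqref{svm2} and guarantees that the reduced problem has the same optimal value as \eqref{eq:ldp}. The rest is bookkeeping already supplied by Proposition~\ref{prop:deri}.
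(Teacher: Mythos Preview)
Your proposal is correct and follows essentially the same route as the paper: invoke Proposition~\ref{prop:deri} to carry out the inner minimization via the stationarity conditions \eqref{eq:partial}, then eliminate $\mathbf{v}$ through $v_d=C-a_d$ and $v_d\ge 0$ to obtain the box constraint $0\le a_d\le C$.

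The one noteworthy difference is how the residual term $-\sum_d v_d\xi_d$ in \eqref{eq:refomlb} is disposed of. The paper keeps this term after the substitution and then appeals to complementary slackness (Proposition~\ref{prop:sd&kkt}) to argue that $v_d\xi_d=0$ at the optimum. You instead observe that once $C-a_d-v_d=0$ is imposed, the total coefficient of every $\xi_d$ in $L$ is $(C-a_d-v_d)=0$, so all $\xi$-terms cancel algebraically and no appeal to KKT is needed. Your argument is the cleaner one here, and it also makes explicit the $-\infty$ reasoning that justifies promoting the stationarity equalities to hard constraints of the outer problem, a point the paper leaves implicit.
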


%\proof{Proof.}
\begin{proof}
By Proposition \ref{prop:deri}, we obtain an equivalent formulation of \eqref{eq:ldp} as follows.
\begin{subequations}\label{eq:ldpreform1}
\begin{alignat}{2}
\mathop{\textrm{max}}\limits_{\mathbf{a}, \mathbf{v } \geq 0} \ & \ \frac{1}{2} \sum_{{d}=1}^{\Gamma}\sum_{d'=1}^{\Gamma} l_{d} l_{d'} a_{d} a_{d'}K(\mathbf{o}_d,\mathbf{o}_{d'} ) + \sum_{{d}=1}^{\Gamma}a_{d} - \sum_{{d}=1}^{\Gamma} v_{d} \xi_{d} \label{eq:ldpreform1obj}  \\
\text{with} \ & \  \sum_{{d}=1}^{\Gamma} a_{d} l_{d} = 0; \\
& \  C - a_{d} - v_{d} = 0 \ \  \ {d} = 1, \ldots, \Gamma ;\label{eq:ldpreform1c}  \ \ \ 
\end{alignat}
\end{subequations}

In the third term in the objective function \eqref{eq:ldpreform1obj}, all $v_d \xi_d , \ \forall d = 1, \ldots, \Gamma$ are zero at the optimum because of the complementary slackness by Proposition \ref{prop:sd&kkt}. 
Therefore, we can discard the third term without loss of optimality.
Moreover, because $v_{d} \geq 0, \ \forall d = 1, \ldots, \Gamma$, we can combine \eqref{eq:ldpreform1c} with $\mathbf{v } \geq 0$ and derive valid constraints $a_{d} \leq C, \ \forall d = 1, \ldots, \Gamma  $, which helps to eliminate variables $v_d , \ \forall d = 1, \ldots, \Gamma$.
Finally, we can rewrite model \eqref{eq:ldpreform1} as shown in \eqref{svm2} \citep[see, e.g.,][]{Chang2011LIBSVM}. This completes our proof. 
\end{proof}
%\hfill \Halmos
%\endproof

\begin{proposition}\label{prop:para}
The parameter of the classifier in \eqref{eq:predict3} are $\mathbf{w}^* = \sum_{{d}=1}^{\Gamma} a_{d}^* l_{d} \phi_d $ and $b^* = 1 - \sum_{d'=1}^{\Gamma} l_{d'} ( a^*_{d'} K(\mathbf{o}_{d}, \mathbf{o}_{d'}) )$ for any $d = 1, \ldots, \Gamma$ associated with $a_{d}^* \in (0,C)$. 
The three prediction functions \eqref{eq:predict}, \eqref{eq:predict2} and \eqref{eq:predict3} are equivalent to each other,
where the support vector set $S$ in \eqref{eq:predict2} contains all $(\mathbf{o}_d, l_d), \ d = 1, \ldots, \Gamma$ such that $a^*_d > 0$.
\end{proposition}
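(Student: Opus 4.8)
The plan is to extract the optimal primal variables $(\mathbf{w}^*,b^*)$ directly from the KKT system, whose validity is guaranteed by Proposition~\ref{prop:sd&kkt}, and then to verify that the three prediction rules agree by substitution, combined with the kernel identity $K(\mathbf{o}_1,\mathbf{o}_2)=\langle\phi(\mathbf{o}_1),\phi(\mathbf{o}_2)\rangle$ and the symmetry of $K$. First I would obtain $\mathbf{w}^*$ essentially for free: since (SVM-P) is differentiable and strong duality holds, the stationarity condition already evaluated in \eqref{eq:partial}, namely \eqref{eq:avalue}, gives $\mathbf{w}^*=\sum_{d=1}^{\Gamma} a_d^* l_d \phi_d$, which is exactly the claimed form.

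The substantive step is recovering $b^*$, and this is where I expect the main difficulty. I would single out an index $d$ with $a_d^*\in(0,C)$ and invoke complementary slackness from both constraint families. From $a_d^*>0$ and the slackness condition on \eqref{subeq:firsttype}, the corresponding inequality is tight: $l_d\big(\mathbf{w}^{*T}\phi(\mathbf{o}_d)+b^*\big)-1+\xi_d^*=0$. From $a_d^*<C$ together with the stationarity relation $C-a_d^*-v_d^*=0$ in \eqref{eq:partial}, I obtain $v_d^*>0$, so slackness on \eqref{subeq:secondtype} forces $\xi_d^*=0$; this is precisely why the open interval $(0,C)$ is required, rather than merely $a_d^*>0$. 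Combining the two relations yields $l_d\big(\mathbf{w}^{*T}\phi(\mathbf{o}_d)+b^*\big)=1$, and hence, using $l_d^2=1$, an explicit value for $b^*$ after substituting $\mathbf{w}^*$ and rewriting each $\phi_{d}^T\phi_{d'}$ as $K(\mathbf{o}_d,\mathbf{o}_{d'})$. I would keep careful track of the sign factor $l_d$ in this last simplification, since this is the one place where the bookkeeping can go wrong and where the stated formula must be reconciled with the derived expression.

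For the equivalence of the three predictors, I would substitute $\mathbf{w}^*$ into the argument of \eqref{eq:predict3}; the kernel identity turns $\mathbf{w}^{*T}\phi(\mathbf{o}')$ into $\sum_{d=1}^{\Gamma} l_d a_d^* K(\mathbf{o}',\mathbf{o}_d)$, which is exactly the argument of $\textrm{sign}[\cdot]$ in \eqref{eq:predict}, so \eqref{eq:predict3} and \eqref{eq:predict} coincide. Then, because every term with $a_d^*=0$ contributes nothing to the sum, it collapses onto the index set $S=\{(\mathbf{o}_d,l_d):a_d^*>0\}$, recovering \eqref{eq:predict2}. Since the three arguments of $\textrm{sign}[\cdot]$ are pointwise identical in $\mathbf{o}'$, the three classifiers agree as functions, which completes the argument.

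In short, the $\mathbf{w}^*$ claim and the predictor equivalences are routine consequences of stationarity and the kernel trick, so the core of the work is the complementary-slackness argument for $b^*$, and the delicate point there is using \emph{both} active constraints at a margin support vector to conclude $\xi_d^*=0$ and then tracking the $l_d$ factor through to the final closed form.
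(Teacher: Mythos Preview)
Your proposal is correct and follows essentially the same approach as the paper's proof: both obtain $\mathbf{w}^*$ from the stationarity condition \eqref{eq:avalue}, recover $b^*$ via complementary slackness at an index with $a_d^*\in(0,C)$ (forcing $\xi_d^*=0$), and then reduce \eqref{eq:predict3} to \eqref{eq:predict} and \eqref{eq:predict2} by substitution and dropping zero-coefficient terms. Your caution about the $l_d$ factor is well placed---the derived expression is $b^*=l_d-\sum_{d'}l_{d'}a_{d'}^*K(\mathbf{o}_d,\mathbf{o}_{d'})$, which matches the paper's stated formula only when $l_d=1$.
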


%\proof{Proof.}
\begin{proof}
The value of $\mathbf{w}^*$ is obtained by \eqref{eq:avalue} and it shows the equivalence between \eqref{eq:predict} and \eqref{eq:predict3}.
Assume that we solve and obtain an optimal solution $\mathbf{a}^*$ to \eqref{svm2}.
Then following the complementary slackness:
$$
 a_d \left[ l_{d} \cdot \big( \mathbf{w}^T \phi(\mathbf{o}_{d}) + b\big)  - 1  + \xi_{d} \right]= 0 , \ v_d  \xi_d = 0, \ \forall d = 1, \ldots, \Gamma,
$$
we have:
\begin{itemize}
    \item If $a^*_d = C > 0$, then $ l_{d} \cdot \big(  \mathbf{w^*}^T \phi(\mathbf{o}_{d} ) + b\big)  = 1  - \xi^*_{d} $. By $a^*_d = C - v^*_d$ we have $v^*_d = 0$ and thus $\xi^*_d \geq 0$. The observation $d$ is called non-margin support vector.
    \item If $0 < a^*_d < C$, then $l_{d} \cdot \big( \mathbf{w^*}^T \phi(\mathbf{o}_{d}) + b\big)  = 1  - \xi^*_{d} $. Similarly, we have $v^*_d > 0$ and thus $\xi^*_d = 0$. The observation $d$ is called margin support vector. Therefore, we can compute $b^* = 1 - \sum_{d'=1}^{\Gamma} l_{d'} ( a^*_{d'} K(\mathbf{o}_{d}, \mathbf{o}_{d'}))$ with any $d = 1, \ldots, \Gamma$ associated with $a_{d}^* \in (0,C)$, 
    \item If $ a^*_d =0$, then this type of observation $d$ does not affect the value of the second prediction function. Therefore, we can build a support vector set $S$ of $(\mathbf{o}_d, l_d), \ d = 1, \ldots, \Gamma$ with $a^*_d \neq 0$ and thus simplify \eqref{eq:predict} as \eqref{eq:predict2}. 
\end{itemize}
\end{proof}
%\hfill \Halmos
%\endproof

\section{Detailed Formulations of Problems for Computational Studies}

\subsection{CFLP}\label{app:cflp}

Consider a set $W$ of production plants (facilities) and a set $F$ of factories which have uncertain demand $\tilde{d}$.
The setup cost of facility $i, \ \forall i \in W$ is $k_i$ and the production capacity limit is $u_i$.
The demand of factory $j, \forall j \in F$ is uncertain and can be satisfied by products produced in facility $i, \ \forall i \in W$ if it is open with a unit transportation cost $c_{ij}$, and the unmet demand will generate lost-sale with a unit penalty cost $\rho_j$.
One needs to decide a subset of facilities to open before the realization of the demand to minimize the expected total cost.

The two-stage stochastic programming model consists of two types of decisions. 
We define first-stage binary decision variables $x_i , \ \forall i \in W$ such that $x_i =1$ if we open facility $i$ and $x_i=0$ otherwise. 
In the second stage, we obtain the demand value from each factory and define continuous decision variables $y_{ij} \geq 0, \ \forall i \in W, \ j \in F$, which represent transportation units from facility $i$ to factory $j$.
The model aims to find the best decisions to minimize the facility setup cost, expected transportation cost, and expected lost-sale cost.
The first-stage formulation is:
\begin{equation} \label{eq:facility1st}
\begin{aligned}
(\textrm{CFLP}) \ \ \  \min_{x} \ & \  \sum_{i \in W}k_i x_i +  \sum_{\omega \in \Omega} p_{\omega} Q_{\omega}(x) \\   
 \ \textrm{s.t.} \ &  \ x_i \in \{ 0,1\} \ \ \  \ i \in W  . \ \ \\
\end{aligned}
\end{equation}
The second-stage problem for each scenario $\omega$ is defined using variables $y_{ij}, \ i \in W , \ j \in F$ and auxiliary variables $\alpha_j, \ j \in F$ that denote the amount of unmet demand. We have
\begin{equation} \label{eq:facility2nd}
\begin{aligned}
Q_{\omega}(x) \ =  \ \  \min_{y, \alpha } \ & \ \sum_{i \in W} \sum_{ j \in F} c_{ij} y_{ij} + \sum_{j \in F} \rho_j \alpha_j   \\   
 \ \textrm{s.t.} \ &  \ \sum_{j \in F}  y_{ij} \leq u_i x_i  \ \ \ \ \ \ \ \ \ \ \  i \in W ; \\
 & \   \tilde{d}_{\omega,j} - \sum_{i \in W} y_{ij}  \leq \alpha_{j}  \ \ \ \ \  j \in F; \\
 & \ y_{ij} \geq 0 \  \ \ \ \ \ \ \ \ \ \ \ \ \ \ \ \ \ \ \  i \in W, \ j \in F; \ \ \\
 & \ \alpha_j  \geq 0 \ \ \ \ \ \ \ \ \ \ \ \ \ \ \ \ \ \ \ \  \ j \in F . \ \  \ \ 
\\
\end{aligned}
\end{equation}
By allowing unmet demand, the problem always has a feasible solution and Benders decomposition only generates optimality cuts.
Then, we derive the dual of second-stage problems and formulate SPs as shown in Section~\ref{subsec:bd}. 
By defining dual variables $h_i, \ \forall i\in W$ and $\pi_j , \ \forall j \in F$, respectively associated with the first and second constraints in model~\eqref{eq:facility2nd}, we formulate the subproblem in scenario $\omega$ as
\begin{equation}\label{eq:facilitySP}
\begin{aligned}
(\textrm{SP}_{\omega})  \qquad  \max_{h, \pi} \ & \ - \sum_{i \in W} u_i x_i h_i +   \sum_{j \in F} \tilde{d}_{\omega,j}  \pi_j   \\
 \textrm{s.t.} \ \ & - h_i - \pi_j  \leq c_{ij} \ \  i \in W, \ j \in F; \\
& 0 \leq \pi_{j} \leq \rho \ \  \ \ \ \  \ \  j \in F ;\ \\
& \ h_i \geq 0 \ \  \ \ \ \  \ \  \  \ \ \ \  i \in W .\\ 
\end{aligned}
\end{equation}
Letting $V^{\omega,t}$ be a collection of extreme points of $\textrm{SP}_{\omega}$ that have been identified when reaching iteration $t$, we formulate
\begin{equation}\label{eq:facilityRMP}
\begin{aligned}
(\textrm{RMP}^t) \ \ \    \min_{x,\theta} \ & \ \sum_{i \in W} k_i x_i  + \sum_{\omega \in \Omega} p_{\omega} \theta_\omega \\   
 \textrm{s.t.} \ &  \ \theta_\omega \geq - \sum_{i \in W} u_i x_i \hat{h_i} +   \sum_{j \in F} \tilde{d}_{\omega,j}\hat{\pi_j}  \ \ \ \   (\hat{h}_{ i} , \hat{\pi}_{j}  ) \in V^{\omega,t}, \  \omega \in \Omega;\\
& \ x_i \in \{ 0,1\} \ \ \ \ \  \ \ \ \ \  \ \ \ \ \ \ \ \ \ \ \ \ \ \ \ \ \ \ \  i \in W . \ \ \ \ \   \ \
\\
\end{aligned}
\end{equation}

\subsection{CMND}\label{app:cmnd}

Consider a directed network with node set $N$, arc set $A$, and commodity set $K$. An uncertain $\tilde{v}_k$ amount of commodity $k, \forall k \in K$ must be routed from an origin node, $o_k \in N$, to a destination node, $d_k \in N$. 
The installation cost and arc capacity of arc $(i,j),\ \forall (i,j) \in A$ are $f_{ij}$ and $u_{ij}$, respectively.
The cost for transporting one unit of commodity $k, \ \forall k \in K$ on installed arc $(i,j),  \ \forall (i,j) \in A$ is $c_{ij}^k$. 
One needs to decide a subset of arcs to install before the realization of the demand to minimize the expected total cost.
In the first stage, we make binary decisions $x_{ij}, \ \forall (i,j) \in A$ such that $x_{ij} = 1$ if we install arc $(i,j)$. 
In the second stage, we obtain the demand of each commodity and then solve non-negative continuous decisions $y_{ij}^k, \forall (i,j) \in A, \ k \in K$, which represents transportation units of commodity $k$ on arc $(i,j)$.

The first-stage formulation is:
\begin{equation} \label{eq:mc-1}
\begin{aligned}
(\textrm{CMND}) \ \ \  \min_{x} \ & \  \sum_{(i,j) \in A}f_{ij} x_{ij} +  \sum_{\omega \in \Omega} p_{\omega} Q_{\omega}(x) \\   
 \ \textrm{s.t.} \ &  \ x_{ij} \in \{ 0,1\} \ \ \  \ (i,j) \in A  . \ \ \\
\end{aligned}
\end{equation}
The second-stage problem for each scenario $\omega$ is defined with decision variables $y_{ij}^k, \ \forall (i ,j) \in A, \ k \in K$ and auxiliary variables $\alpha_i^k, \ \forall i \in N, \ k \in K$ for denoting unmet demand:
\begin{equation} \label{eq:mc-2}
\begin{aligned}
Q_{\omega}(x) \ =  \ \  \min_{y, \alpha } \ & \ \sum_{(i,j) \in A} \left[ \sum_{ k \in K} c_{ij}^k y_{ij}^k + B \alpha_i^k  \right] \\   
 \ \textrm{s.t.} \ &  \ \sum_{j: (j,i) \in A}  y_{ji}^k - \sum_{j: (i,j) \in A}  y_{ij}^k \leq \tilde{d}_i^k + \alpha_i^k \ \ \ \ \ \ \ \ \ \ \ & i \in N, \ k \in K ; \\
 & \   \sum_{k \in K} y_{ij}^k \leq u_{ij}x_{ij}    \ \ \ \ \   & (i,j) \in A; \\
 & \ y_{ij}^k \geq 0 \  \ \ \ \ \ \ \ \ \ \ \ \ \ \ \ \ \ \ \  &  (i ,j) \in A, \ k \in K; \\
 & \ \alpha_i^k  \geq 0 \ \ \ \ \ \ \ \ \ \ \ \ \ \ \ \ \ \ \ \  \  & i \in N, \ k \in K . 
\\
\end{aligned}
\end{equation}
Define an auxiliary demand unmet cost $B$.
The parameter $\tilde{d}_i^k$ is set to $\tilde{v}^k$ if node $i$ is the origin of the commodity $k$,  $-\tilde{v}^k$ is node $i$ is the destination of the commodity $k$, or $0$ otherwise.

By allowing unmet demand, the problem always has a feasible solution and Benders decomposition only generates optimality cuts.
We derive the dual of second-stage problems and formulate SPs as shown in Section~\ref{subsec:bd}. 
By defining dual variables $h_i^k, \ \forall i\in N, \ k \in K$ and $\pi_{ij} , \ \forall (i,j) \in A$, respectively associated with the first and second constraints in model~\eqref{eq:mc-2}, we formulate the subproblem in scenario $\omega$ as
\begin{equation}\label{eq:mc-sp}
\begin{aligned}
(\textrm{SP}_{\omega})  \qquad  \max_{h, \pi} \ & \  \sum_{i \in N,  k \in K}  - \tilde{d}_i^k h_i^k  -   \sum_{(i,j) \in A} u_{ij} x_{ij}  \pi_{ij}   \\
 \textrm{s.t.} \ \ & h_i^k - h_j^k - \pi_{ij}   \leq c_{ij}^k \ \  & (i,j) \in A, \ k \in K; \\
& \ 0 \leq \pi_{ij}  \ \  \ \ \ \  \ \  \  \ \ \ \ & (i,j) \in A.\\ 
& \ 0 \leq h_i^k \leq B \ \  \ \ \ \  \ \  \  \ \ \ \ & i \in N, \ k \in K .
\end{aligned}
\end{equation}
Letting $V^{\omega,t}$ be a collection of extreme points of $\textrm{SP}_{\omega}$ that have been identified when reaching iteration $t$, we formulate
\begin{equation}\label{eq:mc-RMP}
\begin{aligned}
(\textrm{RMP}^t) \ \ \    \min_{x,\theta} \ & \  \sum_{(i,j) \in A}f_{ij} x_{ij}    + \sum_{\omega \in \Omega} p_{\omega} \theta_\omega \\   
 \textrm{s.t.} \ &  \
 \theta_\omega \geq   \sum_{i \in N,  k \in K}  \tilde{d}_i^k \hat{h}_i^k  -   \sum_{(i,j) \in A} u_{ij} x_{ij}  \hat{\pi}_{ij}   \ \ \ \ &  (\hat{h}_{ i} , \hat{\pi}_{ij}  ) \in V^{\omega,t}, \  \omega \in \Omega;\\
& \ \ x_{ij} \in \{ 0,1\} \ \ \  \ & (i,j) \in A .
\end{aligned}
\end{equation}

\end{document}